\newtheorem{prop}{Proposition}
\newtheorem{thm}{Theorem}
\newtheorem{lem}{Lemma}
\newcommand{\Rho}{\mathrm{P}}
\begin{document}

\title{Distributionally Robust Facility Location Problem under Decision-dependent Stochastic Demand} 

\author{Beste Basciftci\thanks{H. Milton Stewart School of Industrial and Systems Engineering, Georgia Institute of Technology, Email: {\tt beste.basciftci@gatech.edu};}~~~Shabbir Ahmed\thanks{H. Milton Stewart School of Industrial and Systems Engineering, Georgia Institute of Technology;}~~~Siqian Shen\thanks{Department of Industrial and Operations Engineering, University of Michigan at Ann Arbor, Email: {\tt siqian@umich.edu};}}

\date{}

\maketitle

\abstract{Facility location decisions significantly impact customer behavior and consequently the resulting demand in a wide range of businesses. Furthermore, sequentially realized uncertain demand enforces strategically determining locations under partial information. To address these issues, we study a facility location problem where the distribution of customer demand is dependent on location decisions. We represent moment information of stochastic demand as a piecewise linear function of facility-location decisions. Then, we propose a decision-dependent distributionally robust optimization model, and develop its exact mixed-integer linear programming reformulation. We further derive valid inequalities to strengthen the formulation. We conduct an extensive computational study, in which we compare our model with the existing (decision-independent) stochastic and robust models. Our results demonstrate superior performance of the proposed approach with remarkable improvement in profit and quality of service by extensively testing problem characteristics, in addition to computational speed-ups due to the formulation enhancements. These results draw attention to the need of considering the impact of location decisions on customer demand within this strategic-level planning problem.}

~\\
{\bf Keywords:} Facilities planning and design; Distributionally robust optimization; Decision-dependent uncertainty; Integer programming

\section{Introduction}

Determining facility locations has been a fundamental problem in managerial-level decision making for modern transportation and logistics systems. In the most classical setting, a decision maker determines a subset of locations from a given set of candidate sites to open facilities, while assigning customer demand to these locations and minimizing related cost. 
The customer demand plays a critical role in this regard as it is a driving factor in determining where to open facilities. In various types of businesses, location decisions inherently impact customer demand as well. In particular, the availability of nearby facilities can affect customer behavior by boosting demand, which needs to be taken into account in this strategic-level facility location planning. 

The facility location decisions significantly affect customer demand in many settings and consequently determine the success of a business. For instance, in carsharing businesses such as Zipcar and Car2go, customers can choose from a wide range of vehicles to rent for a short period of time, while having fewer cost and responsibilities as compared to full ownership of the vehicles  \citep{Shaheen1998}. Customers can pick up and drop off vehicles from certain rental locations in parking lots, referred to as \textit{stations}, and the convenience of doing so determines the quality of carsharing service. \citet{Jorge2013, Ciari2014} demonstrate that customer demand is affected mainly by the distance to station locations, and therefore their choices of whether or not to use carsharing are significantly impacted by the service availability within their neighborhood \citep[see][]{LeVine2011, Boldrini2016}. Additionally, \citet{Shaheen2006} indicate that when customers observe more vehicle availability and usage convenience after new rental stations open, their confidence to the carsharing service increases, resulting in higher demand for the service.

In addition to carsharing, similar impacts are observed from problems of warehouse location selection on customer demand in supply chains \citep{PengKuan1995}. \citet{Erlenkotter1977} introduces price-sensitive demand relationship in the facility location problem, where demands are associated with facility locations and pricing strategy. 
These studies highlight the importance of location decisions on customer behavior, and present  the need to integrate this decision-dependent demand information in the strategic planning phase of locating facilities. 
% This sentence is taken from Shaheen2006: `With increased saturation, members may gain confidence in vehicle availability and convenience and ultimately increase use.'

On the other hand, customer demands are random and unknown when planning facility locations \citep{Mai1981}. In most cases, the decision maker does not have sufficient  information about the underlying demand. One approach is to consider demand forecasts when planning, and then solve a deterministic facility location problem using estimated demand values. Although a deterministic model is easier to handle from solving perspective, it produces inaccurate results by not fully capturing the underlying uncertainty. Stochastic and robust optimization approaches help the decision maker in this regard, depending on how much information we know about the uncertainty. Although stochastic optimization techniques are powerful in modeling various settings, their applicability is limited to the cases in which the distribution of the underlying uncertainty is fully known. Distributionally robust optimization (DRO) approaches, on the other hand, address this issue, and can obtain robust solutions under partial information. 

In this paper, we propose a distributionally robust facility location problem under demand uncertainty by considering the dependency between customer demand and facility location decisions. To the best of our knowledge, this is the first study that formally considers the impact of location decisions on demand uncertainty within this strategic-level planning problem. 
Below we summarize the contributions of this work. 
\begin{enumerate}
\item We describe moment information of random customer demand by piecewise linear functions of facility-location decisions, highlighting the interplay between location choices and customer behavior. 
\item We formulate a decision-dependent distributionally robust facility location model. We obtain an exact mixed-integer linear programming reformulation of the studied model through duality and convex envelopes. We then propose enhancements to strengthen the formulation, including the derivation of valid inequalities.
\item We conduct numerical studies to demonstrate the effectiveness of the proposed distributionally robust decision-dependent approach from various perspectives. We develop a framework to compare our approach with the existing stochastic and robust optimization methodologies neglecting the decision-dependency. Our results highlight significant increase in profit and reduction in unmet demand, and its robust performance under numerous problem characteristics along with the computational efficiency of the formulation enhancements. 
\end{enumerate}

The remainder of this paper is organized as follows. In Section \ref{litReview}, we review the relevant literature in optimization, transportation, and logistics. In Section \ref{Modelformulation}, we express demand distribution as a function of location decisions, and then present  the decision-dependent distributionally robust facility location problem along with several formulation enhancements. In Section \ref{Computations}, we present our computational studies on a variety of randomly generated instances to evaluate the performance of the proposed approach under different settings. Section \ref{Conclusion} concludes the paper with final remarks and future research directions.

\section{Literature Review}
\label{litReview}

\paragraph{Facility location problem variants.} 
The facility location problem has been studied and analyzed for a wide variety of applications \citep[see, e.g.,][]{Owen1998, Melo2009} to determine the locations of warehouses \citep{Ozsen2008}, distribution centers \citep{Zhang2015}, emergency medical services \citep{Chen2016}, etc. Given the emergence of the Internet of Things (IoT), this problem is also considered for building smarter and connected cities via the determination of the optimal locations of sensors and devices to enhance data flows \citep{Fischetti2017}.  An important branch of facility location studies considers uncertainties in problem parameters such as demand. \citet{Snyder2006} provides a thorough review of facility location problems under various uncertainties in demand and cost parameters, and facility characteristics. 

%Supply chain design with disruptions is an active area of research involving facility location problems that needs to consider uncertainty \cite{Snyder2016}. In particular, \cite{Snyder2005} introduces reliable facility location problem by considering the failure possibilities of the facilities while obtaining a cost-efficient and reliable location. Recently, the variants of this problem have been studied extensively in the literature \cite{Cui2010, Shen2011, An2014}. 
\paragraph{Stochastic programming and DRO methods.} 
Stochastic programming approaches can be applied to address the issue of uncertain parameter once we know the full distributional information. For example, \citet{Santoso2005} consider a stochastic facility location problem by sampling realizations of demand and capacity parameters from a certain distribution, and they propose an accelerated Benders decomposition algorithm. 
%Scenario-based sampling approaches are also employed in this context for disaster management planning  \citep{Chang2007, Garrido2015}. 
DRO provides an alternative approach to solve problems under uncertainty when the decision maker has partial information about the distribution. Although DRO approach yields reliable and cost-efficient solutions, it is less studied in the context of facility location.  \citet{Lu2015} consider distributionally robust reliable facility location problem by optimizing over worst-case distributions based on a given distribution of random facility disruptions; \citet{Santivanez2018} generalize the study in \citet{Lu2015} by ensuring a minimum service level in satisfying demand under each disruption scenario. Recently, most relevant to our work, \citet{Liu2019} study DRO for optimally locating emergency medical service stations under demand uncertainty. To ensure the reliability of their plan, joint chance constraints are introduced and a moment-based ambiguity set is used for representing the uncertainty in demand. However, these studies pose limitation of modeling by not capturing the possible impact of location decisions on the uncertain parameters. 

The DRO literature can be classified by the ambiguity set being used to describe distributional information of uncertainties. One line of research considers statistical distance-based ambiguity sets, which consider distributions within a certain distance to a target distribution. Some of the most studied distance measures in this area are $\phi$-divergence \citep{Bental2013, Jiang2016}, Wasserstein distance \citep{Esfahani2018, Gao2016} and Levy-Prokhorov metric \citep{Erdogan2006}. Despite of the extensive literature and computational tractability results involving distance-based ambiguity sets, these approaches require a nominal distribution describing the underlying uncertainty with a high confidence level, which might not be available at the facility planning phase for many applications, especially when the service is newly launched and there exists no prior customer data. Another line of research is moment-based ambiguity sets \citep{Popescu2007, Delage2010}. Depending on the definition of these ambiguity sets and benefiting from duality results, the related DRO models can be formulated as mixed-integer linear or semidefinite programs. In this paper, we focus on moment-based ambiguity sets in our analyses as they enable the representation of the direct effect of location decisions on the moments of random demand, and do not require a target distribution for parameterizing the ambiguity. 

\paragraph{Modeling decision-dependent uncertainty. } 
Integrating decision-dependent uncertainties within an optimization framework involves modeling challenges and computational complexities. The studies in this area can be categorized into two groups. The first group focuses on decisions impacting the time of information discovery. \citet{Goel2006} propose a mixed-integer disjunctive programming formulation for incorporating the relationship between the underlying stochastic processes and decisions affecting the time that uncertainty is revealed. As solving this problem involves computational challenges, \citet{Vayanos2011} propose a decision rule approximation to ensure its tractability. Recently, \citet{Basciftci2019_Adaptive} formulate a generic mixed-integer linear program for finite stochastic processes, and provide structural results specifically on the time of information discovery for each resource of the capacity expansion planning problem along with approximation algorithms. On the other hand, in the second group of studies, decisions change the distribution of the underlying uncertainty, which is the focus of this paper. For example, in stochastic programming context, \citet{Ahmed2000} considers network design problem under uncertainties dependent to design decisions, whereas \citet{Basciftci2019} model generators' failure probabilities dependent on their maintenance and operational plans. \citet{Hellemo2018} conduct an overview of recent studies in this area by providing ways to model decision-dependent uncertainties in stochastic programs. %\cite{Tarhan2009} models uncertainties in optimizing the gas field infrastructure dependent to the design and operation decisions. 

In the robust optimization literature, decisions affecting the distributions of underlying  uncertainties have been incorporated to the definition of the uncertainty set. \citet{Nohadani2018} study robust linear optimization problems where the uncertainty set is a function of decision variables, and they derive reformulations under specific cases. Similarly, robust decision-dependent optimization problems are considered within several application areas including software partitioning problem \citep{Spacey2012}, radiotherapy planning \citep{Nohadani2017} and offshore oil planning problems \citep{Lappas2017}. However, decision-dependency has not been fully explored within the DRO framework. \citet{Zhang2016} consider generic decision-dependent distributionally robust problems with moment constraints and focus on demonstrating stability of the optimal solutions, whereas \citet{Royset2017} consider these problems under a class of distance-based ambiguity sets to derive convergence results. Most recently, \citet{Noyan2018, Luo2018} provide nonconvex reformulations for DRO problems under various forms of decision-dependent ambiguity sets. Although these studies provide alternative reformulations, the resulting models need further analyses and require the development of efficient solution algorithms. Additionally, the effect of adopting decision-dependent DRO methods in comparison to the existing stochastic or robust methodologies are not quantitatively verified in these studies. 

Despite of this extensive literature in related domains, the impact of facility location decisions on customer behavior and demand uncertainty has not been formally considered to strategically determine locations to open facilities. Furthermore, decision-dependent DRO has not been studied within the facility location context to derive tractable reformulations. We further demonstrate the advantages of our approach via providing comparisons to the alternative decision-independent approaches, and drawing insights from our computational studies. In brief, our paper proposes a novel approach in determining optimal facility locations while considering decision-dependent demand uncertainties by addressing various gaps in the literature of facility location and optimization under uncertainty.  

\section{Problem Formulation}
\label{Modelformulation}

We present a distributionally robust facility location problem, where the facility location decisions affect the underlying demand distribution of each customer site. We first introduce the ambiguity set for describing the distributional information of demand in Section~\ref{ambiguitySetSection}. Then, we formulate the decision-dependent DRO model and propose reformulation techniques to obtain a single-level mixed-integer linear program in Section~\ref{optimizationModelSection}. To strengthen the obtained formulation, we provide a polyhedral study to derive valid inequalities in Section~\ref{validInequalitiesSection}.

\subsection{Ambiguity set formulation} 
\label{ambiguitySetSection}

Consider a set of possible locations $i \in I$ for building facilities and customer sites $j \in J$ that generate demand. Define binary decision variables $y_i, \ i \in I$ to indicate location decisions, such that $y_i$ is 1 if a facility is open at location $i$, and 0 otherwise. The demand at each customer site $j \in J$ is represented by a random variable $d_j(y)$ whose distribution depends on decision vector $y =[y_i, \ i \in I]^{\mathsf T}$. We consider the case where only mean and variance information are given for the demand distribution. Based on a given set of sample points $\{d^n\}_{n=1}^N$ of demand under the case with no facility allocation, we estimate the base-case mean and variance at each site $j$ as $\bar{\mu}_j = 1/N \sum_{n=1}^N d_j^n$ and $\bar{\sigma}_j^2 = 1/(N-1) \sum _{n=1}^N (d_j^n - \bar{\mu}_j)^2$, respectively.  

Suppose that demand distribution comes from a set of distributions with finite support, where the demand at each customer site $j \in J$ can take values from the set $\mathcal{K} = \{d_1, \cdots, d_K\}$ with probabilities $\pi_{j1}, \cdots, \pi_{jK}$. Specifically, the ambiguity set $U(y) = U(y, \mathcal{K}, \bar{\mu}, \bar{\sigma}, \epsilon^\mu, \underline{\epsilon}^\sigma, \overline{\epsilon}^\sigma)$ %in \eqref{eq:ambiguitySet} 
is given by:
\begin{equation} \label{eq:ambiguitySet}
\begin{split}
U(y) = \Biggl\{ \pi_j \in \mathbb{R}^{|K|}_+ : & \sum_{k =1}^K \pi_{jk} = 1 \quad \forall j \in J, \\
& \bigg|\sum_{k =1}^K \pi_{jk} d_k - \mu_j(y) \bigg| \leq \epsilon^\mu_j \quad \forall j \in J, \\ %\mu_j + \sum_{j' \in I(j)} \lambda_{j'} y_{j'} \quad \forall j \in I, \\
& (\sigma_j^2(y) + (\mu_j(y))^2) \underline{\epsilon}^\sigma_j \leq \sum_{k =1}^K \pi_{jk} d_k^2 \leq (\sigma_j^2(y) + (\mu_j(y))^2) \overline{\epsilon}^\sigma_j \quad \forall j \in J \Biggr\}, %\sigma_j^2 (|I| - \sum_{j' \in I(j)} \lambda_{j'} y_{j'}) \quad \forall j \in I  \}.
%U(y) = \{ \pi_k \in \mathbb{R}^{|I|}_+ : & \sum_{k =1}^K \pi_{jk} = 1 \quad \forall j \in I, \\
%& \sum_{k =1}^K \pi_{jk} d_k = \mu_j(y) \quad \forall j \in I, \\ %\mu_j + \sum_{j' \in I(j)} \lambda_{j'} y_{j'} \quad \forall j \in I, \\
%& \sum_{k =1}^K \pi_{jk} d_k^2 = \sigma_j^2(y) + (\mu_j(y))^2 \quad \forall j \in I \}, %\sigma_j^2 (|I| - \sum_{j' \in I(j)} \lambda_{j'} y_{j'}) \quad \forall j \in I  \}. 
\end{split}
\end{equation}
where $\mu_j(y)$ and $\sigma_j^2(y)$ are the mean and variance of site $j$'s demand  depending on decision $y$, respectively. The constraints in set \eqref{eq:ambiguitySet} guarantee that (i) the probabilities at all customer sites over the support set sum up to 1, (ii) the mean of $d_j(y)$ is within an $\ell_1$-based distance $\epsilon^\mu_j$ to the mean $\mu_j(y)$, and (iii) the corresponding second moments of $d_j(y)$ is bounded by the sum of $(\mu_j(y))^2$ and $\sigma_j^2(y)$ with upper- and lower-bound parameters satisfying $0 \leq  \underline{\epsilon}^\sigma_j \leq 1 \leq  \overline{\epsilon}^\sigma_j$. Parameters $\epsilon^\mu_j$, $\underline{\epsilon}^\sigma_j$, $\overline{\epsilon}^\sigma_j$ determine the robustness of the ambiguity set for each customer site $j \in J$. Specifically, if we have the perfect knowledge regarding the first and second moments of random demand at site $j$, then $\epsilon_j^\mu = 0$, and $\underline{\epsilon}^\sigma_j$ = $\overline{\epsilon}^\sigma_j$ = 1. Otherwise, we can adjust these parameters to consider distributions within certain proximity to the desired moment information, which consequently impacts the conservativeness of facility location decisions. 

We assume that the demand at site $j$ increases when new facilities are opened in its neighborhood. However, due to the size and capacity of a market, the increase in demand is restricted by an upper-bound value, denoted as $\mu_j^{UB}$ for each site $j$, which can be estimated by considering the growth potential of a market of interest within the planning horizon. Moreover, we assume that the highest variance of demand at a customer site occurs when there is no available facility in its neighborhood, and set it equal to the empirical variance $\bar{\sigma}_j^2$. As the number of facilities in the neighborhood of a customer site increases, the variance of the demand at that site decreases. However, the variance cannot be less than a pre-determined lower-bound value, denoted as $(\sigma_j^{LB})^{2}$ for site~$j$, because of the inherent uncertainty in the market. 

The above assumptions are supported by \citet{Shaheen2006, Hernandez2010}, who demonstrate the increase in customers' confidence based on their past experiences with the provided service and its more availability. Consequently, increased customer confidence is associated with increase in the mean and decrease in the variance of customer demand. We interpret the mean and variance information using piecewise linear functions of the decision variable $y$ as follows to indicate these relations: 
\begin{equation} \label{eq:momentFunctions}
\begin{split}
\mu_j(y) & = \min \left\{\bar{\mu}_j (1 + \sum_{j' \in I} \lambda^\mu_{jj'} y_{j'}), \mu_j^{UB}\right\}, \\
\sigma_j^2(y) & = \max \left\{\bar{\sigma}_j^2 (1 - \sum_{j' \in I} \lambda^\sigma_{jj'} y_{j'}), (\sigma_j^{LB})^{2}\right\}.
%\sigma_j^2(y) & = \zeta_j (|I| - \sum_{j' \in I} \lambda^\sigma_{jj'} y_{j'}).
\end{split}
\end{equation}  
In \eqref{eq:momentFunctions}, the effect of the distance of different facility locations on demand at a target customer site $j$ is controlled by parameters $\lambda_j^\mu, \ \lambda_j^\sigma \in [0,1]^{|I|}$, in such a way that closer locations can have higher impacts on the first and second moments, and further locations have less effect. %In particular, locations in the neighborhood of the demand point $j$ have positive $\lambda_j^\mu, \ \lambda_j^\sigma$ components, whereas locations which are not in the neighborhood have no effect. %Let $I(j)$ denote the set of locations that are in the neighborhood of the location $j$. Then, for $j' \in I(j)$, $\lambda_{jj'} > 0$ and for $j' \notin I(j)$ , $\lambda_{jj'} = 0$. Any location is an element of its neighborhood, i.e. $\{j\} \in I(j)$, and $\lambda_{jj} = 1$ $\forall j \in I$. 
In particular, $\sum_{j' \in I} \lambda^\sigma_{jj'} < 1$ for all $j \in J$ by assumption. %The effect of the location decisions $y$ on the demand distribution can be tuned with the choice of $\lambda_j^\mu, \lambda_j^\sigma$ vectors. 
%any location is an element of its neighborhood, i.e. $\{j\} \in I(j)$, and its associated component in $\lambda_j$ is 1. %We note that the correlation between different demand locations are ignored in formulation \eqref{eq:ambiguitySet}. 
%We note that the functions $\mu_j(y)$ and $\sigma^2_j(y)$ in this draft are the initial attempts to represent the ambiguity set in a tractable manner. These definitions can be revised with respect to the application specific details. 

We illustrate the effect of the above decision dependency in Figure \ref{momentFunctionsFigure}, where the first figure shows the change in the mean and the second figure depicts the change in the variance with respect to parameters $\lambda_j^\mu$ and $\lambda_j^\sigma$. For demonstration purposes, we assume the first open facility to be the closest one to customer site $j$, the second open facility to be the second closest, and so on. We highlight four different cases for these parameters such that in Case~(a), facility location decisions have no effect on demand distribution; in Case~(b) all facilities equally affect the first two moments; in Case~(c) closer facilities have higher impact; and in Case~(d) only the closest facility impacts customer demand. This illustration demonstrates different impacts of location decisions on customer demand, based on the dependency between moment information and customer behavior. 
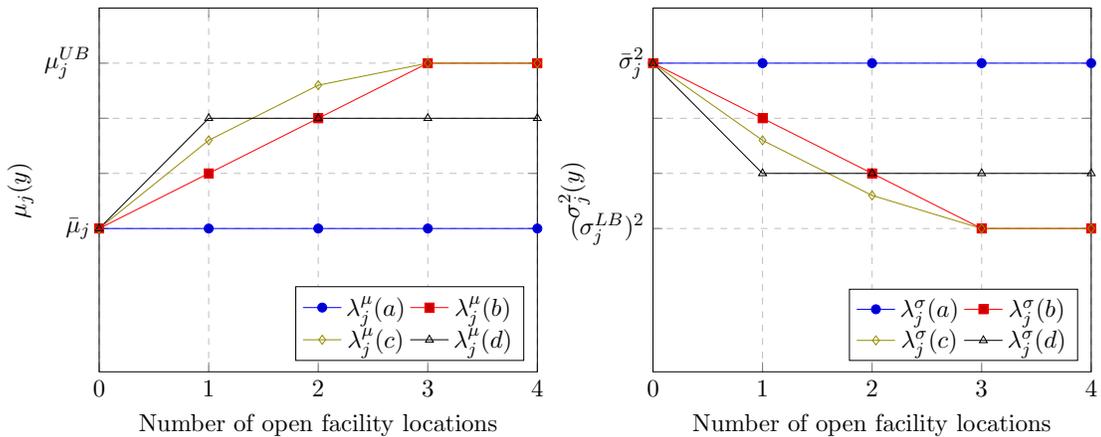
\begin{figure}[h]
\centering
\captionsetup{justification=centering}
\begin{subfigure}[b]{.45\textwidth}
  \centering
\begin{tikzpicture}[scale=0.85]
\begin{axis}[
	xlabel={Number of open facility locations},
	ylabel={$\mu_j (y)$},
	ymin =-3, ymax= 30,
	xmin = 0, xmax = 4,
	xtick = {0, 1, 2, 3, 4},
	ytick = {10,15,20,25},
	yticklabels = {$\bar{\mu}_j$, , ,$\mu_j^{UB}$},
     legend pos=south east,
    ymajorgrids=true,
    xmajorgrids=true,
    grid style=dashed,
    legend columns=2,
]

\addplot coordinates {
	(0,10)(1,10)(2,10)(3,10)(4,10)  
};

%\draw [dotted] (0,25) -- (4,25);
\addplot coordinates {
	(0,10)(1,15)(2,20)(3,25)(4,25)  
};

\addplot[mark=diamond,mark options={fill=white},color=olive]  coordinates {
	(0,10)(1,18)(2,23)(3,25)(4,25)  
};

\addplot[mark=triangle] coordinates {
	(0,10)(1,20)(2,20)(3,20)(4,20)  
};

\legend{$\lambda_j^\mu (a)$, $\lambda_j^\mu (b)$, $\lambda_j^\mu (c)$, $\lambda_j^\mu (d)$}
\end{axis}
\end{tikzpicture}
\end{subfigure}
\begin{subfigure}[b]{.45\textwidth}
  \centering
  \begin{tikzpicture}[scale=0.85]
\begin{axis}[
	xlabel={Number of open facility locations},
	ylabel={$\sigma_j^2(y)$},
	ymin = -3, ymax= 30,
	xmin = 0, xmax = 4,
	xtick = {0, 1, 2, 3, 4},
	ytick = {10,15,20,25},
	yticklabels = {$(\sigma_j^{LB})^{2}$, , ,$\bar{\sigma}_j^2$},
     legend pos=south east,
    ymajorgrids=true,
    xmajorgrids=true,
    grid style=dashed,
    legend columns=2,
]

\addplot coordinates {
	(0,25)(1,25)(2,25)(3,25)(4,25)  
};
%\draw [dotted] (0,25) -- (4,25);
\addplot coordinates {
	(0,25)(1,20)(2,15)(3,10)(4,10)  
};

\addplot[mark=diamond,mark options={fill=white},color=olive] coordinates {
	(0,25)(1,18)(2,13)(3,10)(4,10)  
};

\addplot[mark=triangle] coordinates {
	(0,25)(1,15)(2,15)(3,15)(4,15)  
};

\legend{$\lambda_j^\sigma (a)$, $\lambda_j^\sigma (b)$, $\lambda_j^\sigma (c)$, $\lambda_j^\sigma (d)$}
\end{axis}
\end{tikzpicture}
\end{subfigure}
\caption{Effect of the open facility locations on the moment information of demand.}
\label{momentFunctionsFigure}
\end{figure}

\subsection{DRO model and reformulation}
\label{optimizationModelSection}

In addition to decision variables $y_i, \ i \in I$, we define decision variables $x_{ij}$ and $s_j$ denoting at each customer site $j, \ \forall j \in J$, the amount of demand satisfied by facility $i$, and unsatisfied demand amount, respectively. Parameters $f_i$, $c_{ij}$, $p_j$, $r_j$ represent the cost of opening a facility at location $i$, unit transportation cost from location $i$ to site $j$, penalty of each unit of unsatisfied demand at site $j$, and revenue for satisfying each unit of demand at site $j$, respectively. We assume that the unit penalty of unmet demand at each customer site is higher than the unit cost of transportation from any two location pairs, i.e., $p_j > c_{ij}, \ \forall i \in I, \ j \in J$. This assumption is sensible in many business settings to assure the quality of service as high as possible, via guaranteeing customer satisfaction. 

Furthermore, instead of assuming a total amount of capacity at each individual facility, we consider a relaxed capacity restriction and assume that the capacity at each facility is pre-divided for individual customer sites. For example, to prepare for shipments, different sizes of vehicle fleets are pre-booked and scheduled to serve customers in different regions. Denote the total capacity in each location $i$ as $\sum_{j \in J} C_{ij}$, where $C_{ij}$ is the capacity at location $i$ dedicated to customer site $j$. For notational convenience, without loss of generality, we further simplify the case by assuming the same amount of capacity pre-allocated to serve each customer (i.e., $C_{ij}$ is the same and equals to $C_i$ for all the customer sites $j$).
   
We formulate the decision-dependent distributionally robust facility location problem as: 
\begin{equation} \label{eq:distRobustFormulation}
\min_{y \in \mathcal{Y} \subseteq \{0,1\}^{|I|}} \left\{ \sum_{i \in I } f_i y_i + \max_{\pi \in U(y)} \mathbf{E}_{\pi} [h(y,d(y))] \right \}, %f^\top y
\end{equation}
\begin{subequations} \label{eq:distRobustFormulationMostInner}
\begin{alignat}{1}
\mbox{ where } \ h(y,d(y)) = \min_{x,s} \quad & \sum_{i \in I } \sum_{j \in J} c_{ij} x_{ij} + \sum_{j \in J} (p_j  s_j - r_j d_j(y)) \\ % c^\top x + p^\top s \\
\text{s.t.} \quad & \sum_{i \in I} x_{ij} + s_j = d_j(y) \quad \forall j \in J \label{eq:distRobustFormulationDemandConstr} \\
& x_{ij} \leq C_i y_i \quad \forall i \in I, j \in J \label{eq:distRobustFormulationCapacityConstr} \\
& s_i, x_{ij} \geq 0 \quad \forall i \in I, j \in J.
\end{alignat}
\end{subequations}
The objective function \eqref{eq:distRobustFormulation} minimizes the total cost of locating facilities and the maximum expected cost of transportation and unmet demand minus revenue for any demand distribution $\pi \in U(y)$. We let the polyhedron $\mathcal{Y}$ include constraints that are solely related to facility-location decisions. Constraint \eqref{eq:distRobustFormulationDemandConstr} ensures that demand at each customer site is either satisfied by other locations or penalized, while constraint \eqref{eq:distRobustFormulationCapacityConstr} enforces capacity restriction for each open facility $i \in I$.

To derive  a single-level reformulation that can be directly handled by off-the-shelf solvers, we first show a closed-form solution to the inner problem \eqref{eq:distRobustFormulationMostInner}. 

\begin{prop} \label{propInnerProbReformulation}
The optimal objective value of problem \eqref{eq:distRobustFormulationMostInner} can be computed by
\begin{equation} \label{eq:closedFormInnerH}
h(y,d(y)) = \sum_{j \in J} \left( \max_{i^* = 0, 1, \cdots, |I|} \left\{ c_{i^*j} d_j(y) + \sum_{i \in I: c_{ij} < c_{i^*j}} C_i y_i (c_{ij} - c_{i^*j}) \right\} - r_j d_j(y) \right),
\end{equation}
where $c_{0j} := p_j$.
\end{prop}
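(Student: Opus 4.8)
The plan is to exploit the separability of the inner problem, pass to the LP dual of each per-site subproblem, and reduce it to a one-dimensional concave maximization whose optimum is attained at a breakpoint.

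First I would observe that problem \eqref{eq:distRobustFormulationMostInner} decomposes across customer sites $j \in J$: the capacity restrictions \eqref{eq:distRobustFormulationCapacityConstr} bound each $x_{ij}$ individually rather than coupling allocations through a shared budget, and both the objective and the demand-balance constraints \eqref{eq:distRobustFormulationDemandConstr} are separable in $j$. It therefore suffices to analyze, for each fixed $j$, the subproblem of minimizing $\sum_{i \in I} c_{ij} x_{ij} + p_j s_j$ subject to $\sum_{i\in I} x_{ij} + s_j = d_j(y)$, $0 \le x_{ij} \le C_i y_i$, and $s_j \ge 0$; the constant $-r_j d_j(y)$ is carried along and the per-site results are summed at the end.

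Next I would form the LP dual of this subproblem. Assigning a free multiplier $\alpha_j$ to the balance constraint and nonnegative multipliers $\beta_{ij}$ to the capacity bounds, the dual maximizes $d_j(y)\alpha_j - \sum_{i\in I} C_i y_i \beta_{ij}$ subject to $\alpha_j - \beta_{ij} \le c_{ij}$, $\alpha_j \le p_j$, and $\beta_{ij} \ge 0$. Since each coefficient $C_i y_i \ge 0$ enters with a negative sign, at optimality one takes $\beta_{ij} = (\alpha_j - c_{ij})^+$, which eliminates the $\beta$ variables and leaves the univariate concave program $\max_{\alpha_j \le p_j} F_j(\alpha_j)$ with $F_j(\alpha_j) = d_j(y)\alpha_j - \sum_{i\in I} C_i y_i (\alpha_j - c_{ij})^+$. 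Writing $c_{0j}:=p_j$, the feasible set is the ray $\alpha_j \le c_{0j}$.

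The crux is to locate the maximizer of $F_j$, a concave piecewise-linear function whose only kinks occur at the values $\alpha_j = c_{ij}$. Its one-sided slope $d_j(y) - \sum_{i:\, c_{ij} < \alpha_j} C_i y_i$ is nonincreasing in $\alpha_j$, so a maximizer over the ray $\alpha_j \le c_{0j}$ can always be chosen among the breakpoints together with the right endpoint $c_{0j}$. This is exactly where the standing assumption $p_j > c_{ij}$ for all $i$ enters: it forces every facility breakpoint $c_{ij}$ to satisfy $c_{ij} < c_{0j}=p_j$, so all candidate points $\{c_{i^*j}: i^*=0,1,\dots,|I|\}$ are feasible for $\alpha_j \le p_j$ and the endpoint $c_{0j}$ is the largest among them; consequently $\max_{\alpha_j \le p_j} F_j(\alpha_j) = \max_{i^* = 0,\dots,|I|} F_j(c_{i^*j})$. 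Substituting $\alpha_j = c_{i^*j}$ and using $(c_{i^*j}-c_{ij})^+ = (c_{i^*j}-c_{ij})\,\mathbbm{1}[c_{ij} < c_{i^*j}]$ collapses $F_j(c_{i^*j})$ to the bracketed expression $c_{i^*j} d_j(y) + \sum_{i:\, c_{ij} < c_{i^*j}} C_i y_i (c_{ij} - c_{i^*j})$ appearing in \eqref{eq:closedFormInnerH}.

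To finish, I would invoke strong LP duality: the primal subproblem is feasible (take $x_{ij}=0$, $s_j = d_j(y) \ge 0$) and bounded below, so its optimal value equals the dual optimum just computed. Adding back the constant $-r_j d_j(y)$ and summing over $j \in J$ reproduces \eqref{eq:closedFormInnerH}. The two steps that merit care are the elimination $\beta_{ij} = (\alpha_j - c_{ij})^+$, valid precisely because $C_i y_i \ge 0$, and the reduction to breakpoints, where the assumption $p_j > c_{ij}$ guarantees the breakpoint set already respects the bound $\alpha_j \le p_j$; without it, an index $i^*$ with $c_{i^*j} > p_j$ could spuriously inflate the maximum.
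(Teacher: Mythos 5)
Your proof is correct, and it follows the paper's skeleton: decompose the inner problem by customer site, dualize each subproblem, characterize the dual optimum in closed form, and close with strong duality; indeed your dual with multipliers $(\alpha_j,\beta_{ij})$ is exactly the paper's dual \eqref{eq:decomposedProgramDual} under the sign change $\upsilon_i=-\beta_{ij}$. The one genuine difference is how the dual LP is solved. The paper enumerates extreme points of the dual polyhedron by counting tight constraints (cases $\beta=p_j$ and $\beta<p_j$, the latter forcing $\beta=c_{i^*j}$ for some $i^*$ and $\upsilon_i=\min\{0,\,c_{ij}-c_{i^*j}\}$), whereas you project out the capacity multipliers via $\beta_{ij}=(\alpha_j-c_{ij})^+$ (valid because their objective coefficients $-C_iy_i$ are nonpositive) and reduce the dual to maximizing the univariate concave piecewise-linear function $F_j$ over the ray $\alpha_j\le p_j$, whose maximum can be taken at a breakpoint or at the endpoint $c_{0j}=p_j$. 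Both arguments invoke the assumption $p_j>c_{ij}$ at the analogous spot: the paper to rule out the $\upsilon_i=0$ branch when $\beta=p_j$, you to guarantee that every candidate $c_{i^*j}$ is feasible and that the candidate set contains a maximizer. Your projection argument is somewhat more economical, since it avoids verifying which dual points are extreme and handles all cases in one stroke, while the paper's enumeration makes the extreme-point structure explicit, in the same style it reuses for the extreme-ray analysis behind the valid inequalities (Lemma~\ref{extremeRayProp}). The only detail worth stating explicitly in your write-up is attainment of the maximum of $F_j$ over the unbounded ray: it holds because the leftmost slope of $F_j$ equals $d_j(y)\ge 0$, so $F_j$ cannot tend to $+\infty$ as $\alpha_j\to-\infty$; alternatively, attainment follows from strong duality itself, since the primal subproblem is feasible over a compact region.
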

\begin{proof}
%To solve the resulting program \eqref{eq:distRobustFormulation}, 
Note that the most inner problem \eqref{eq:distRobustFormulationMostInner} can be decomposed with respect to each location $j$. Therefore, we express $h(y,d(y))$ as $\sum_{j \in I} h_j(y,d(y))$, where
\begin{subequations} \label{eq:decomposedProgram}
\begin{alignat}{1}
h_j(y,d(y)) = \min_{x_{.j}, s_j} \quad &  \sum_{i \in I } c_{ij} x_{ij} + p_j  s_j  - r_j d_j(y)\\
\text{s.t.} \quad & \sum_{i \in I} x_{ij} + s_j = d_j(y) \label{eq:decomposedConstr1}\\
& x_{ij} \leq C_i y_i \quad \forall i \in I \label{eq:decomposedConstr2}\\\
& s_j, x_{ij} \geq 0 \quad \forall i \in I.
\end{alignat}
\end{subequations}

Let $\beta$ and $\upsilon_i$ be the dual variables associated with constraints \eqref{eq:decomposedConstr1} and \eqref{eq:decomposedConstr2}, respectively. We formulate the dual of model \eqref{eq:decomposedProgram} as 
\begin{subequations} \label{eq:decomposedProgramDual}
\begin{alignat}{1}
\max_{\beta, \> \upsilon_i} \quad & \beta d_j(y) + \sum_{i \in I } C_i y_i \upsilon_i \\
\text{s.t.} \quad & \beta + \upsilon_i \leq c_{ij} \quad \forall i \in I \label{eq:decomposedDualConstr1} \\
& \beta \leq p_j \\
& \upsilon_i \leq 0 \quad \forall i \in I
\end{alignat}
\end{subequations}
To identify the optimal objective value of model \eqref{eq:decomposedProgramDual}, we derive the extreme points of its feasible region. %of the program \eqref{eq:decomposedProgramDual}, 
To this end, we examine two cases through counting the number of tight constraints. 
\begin{enumerate}
\item \underline{$ \beta = p_j$ :} In this case,  for all $i \in I$, either $\upsilon_i = 0$ or $\upsilon_i = c_{ij} - p_j$. Due to \eqref{eq:decomposedDualConstr1} and $p_j > c_{ij}$, we have $\upsilon_i \leq c_{ij} - p_j < 0$, making the condition $\upsilon_i = 0$ redundant. Therefore, when $ \beta = p_j$, $\upsilon_i$ equals to $c_{ij} - p_j$ in the corresponding extreme point. The value of the objective function then becomes $p_j d_j(y) + \sum_{i \in I} C_i y_i (c_{ij} - p_j)$.

\item \underline{$ \beta < p_j$ :} In this case, for all $i \in I$, either $\upsilon_i = 0$ or $\upsilon_i = c_{ij} - \beta$. Additionally, there exists at least one location $i^*$ such that $\upsilon_{i^*} = c_{i^*j} - \beta = 0$. Therefore, at least $|I| + 1$ constraints are satisfied at an extreme point. Thus, $\beta = c_{i^*j}$ for some $i^* \in I$. For $i \in I \setminus \{ i ^* \}$, we have either $\upsilon_i = 0$ or $\upsilon_i = c_{ij} - c_{i^*j}$. Since $\upsilon_i \leq c_{ij} - c_{i^*j}$ and $\upsilon_i \leq 0$, if $c_{ij} < c_{i^*j}$, then $\upsilon_i = c_{ij} - c_{i^*j}$. Otherwise, $\upsilon_i = 0$ because we maximize a positive number times $\upsilon_i$ in the objective. For a given $i^*$ location, the objective function becomes $c_{i^*j} d_j(y) + \sum_{i \in I: c_{ij} < c_{i^*j}} C_i y_i (c_{ij} - c_{i^*j})$.
\end{enumerate}

Combining the above two cases, we obtain a closed-form expression for the optimal objective value of model \eqref{eq:decomposedProgramDual}. Since $ p_j = c_{0j} > c_{ij}, \ \forall i \in I$, the optimal objective value of the problem can be expressed as
\begin{equation} \label{eq:closedFormInnerHDecomposed}
\max_{i^* = 0, 1, \cdots, |I|} \left\{ c_{i^*j} d_j(y) + \sum_{i \in I: c_{ij} < c_{i^*j}} C_i y_i (c_{ij} - c_{i^*j})   \right\}.
\end{equation}
As the program \eqref{eq:decomposedProgramDual} is feasible and bounded, strong duality holds between models \eqref{eq:decomposedProgram} and \eqref{eq:decomposedProgramDual}. As a result, the optimal objective value of \eqref{eq:decomposedProgram} equals to 
\begin{equation} \label{eq:closedFormInnerHDecomposed2}
\max_{i^* = 0, 1, \cdots, |I|} \left\{ c_{i^*j} d_j(y) + \sum_{i \in I: c_{ij} < c_{i^*j}} C_i y_i (c_{ij} - c_{i^*j}) \right\} - r_j d_j(y),
%\eqref{eq:closedFormInnerHDecomposed},  
\end{equation}
which completes the proof.
\end{proof}

\begin{thm} \label{reformulationTheorem}
Problem \eqref{eq:distRobustFormulation} can be reformulated as follows: 
\begin{subequations} \label{eq:allFormulationv1}
\begin{alignat}{1}
\min_{y, \alpha, \delta^1, \delta^2, \gamma^1, \gamma^2} \quad & f^\top y + \sum_{j \in J} \bigg( \alpha_j + \delta_j^1 (\mu_j(y) + \epsilon^\mu_j) - \delta_j^2 (\mu_j(y) - \epsilon^\mu_j) \notag \\
& \qquad \qquad \qquad + \gamma_j^1 (\sigma_j^2(y) + (\mu_j(y))^2) \overline{\epsilon}^\sigma_j- \gamma_j^2 (\sigma_j^2(y) + (\mu_j(y))^2) \underline{\epsilon}^\sigma_j\bigg) \label{eq:objNonlinearFull} \\
\text{s.t.} \quad & \alpha_j + (\delta_j^1 - \delta_j^2) d_k + (\gamma_j^1 - \gamma_j^2) d_k^2 \geq \theta_{jk}(y) \quad \forall j \in J, k = 1, \cdots, K, \label{eq:DualConstraint1before}\\
%\text{s.t.} \quad & \alpha_j + (\delta_j^1 -\delta_j^2) d_k + (\gamma_j^1 - \gamma_j^2) d_k^2 \geq c_{i^*j} d_k + \sum_{i \in I: c_{ij} < c_{i^*j}} C_i y_i (c_{ij} - c_{i^*j}) \notag \\
%& \hspace{6.5cm} \forall i^* \in I \cup \{0\}, j \in I, k = 1, \cdots, K  \\
& y \in \mathcal{Y} \subseteq \{0, 1\}^{|I|}, \delta_j^1, \gamma_j^1, \delta_j^2, \gamma_j^2 \geq 0 \quad \forall j \in J,
\end{alignat}
\end{subequations}
where $\theta_{jk}(y) = c_{i_{jk}^* j} d_k + \sum_{i \in I: c_{ij} < c_{i_{jk}^* j}} C_i y_i (c_{ij} - c_{i_{jk}^* j}) - r_j d_k$ and $i_{jk}^*$ is the maximizer of expression~\eqref{eq:closedFormInnerHDecomposed2} with $d_j(y)$ being replaced by $d_k$.
\end{thm}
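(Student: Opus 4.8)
The plan is to reduce the inner $\max_{\pi \in U(y)}$ problem to a linear program in the probability vector for each fixed $y$, dualize it, and merge the resulting minimization with the outer minimization over $y$. First I would substitute the closed form \eqref{eq:closedFormInnerH} of Proposition~\ref{propInnerProbReformulation} for $h(y,d(y))$ into \eqref{eq:distRobustFormulation}. The crucial structural observation is that both the integrand and the ambiguity set decouple across customer sites: $h = \sum_{j \in J} h_j$ with $h_j$ depending only on $d_j(y)$, and the constraints defining $U(y)$ in \eqref{eq:ambiguitySet} involve each $\pi_j$ separately with no coupling across $j$. Writing $\theta_{jk}(y)$ for the value of $h_j$ at the realization $d_j(y)=d_k$ (that is, $\max_{i^*}\{\cdots\}-r_j d_k$), the worst-case expectation therefore separates as $\max_{\pi \in U(y)}\mathbf{E}_\pi[h] = \sum_{j \in J}\max_{\pi_j}\sum_{k=1}^K \pi_{jk}\theta_{jk}(y)$, where $\pi_j$ ranges over the simplex intersected with the mean and second-moment bands of \eqref{eq:ambiguitySet}.

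For each site $j$ and fixed $y$, this per-site problem is a finite linear program in $\pi_j \in \mathbb{R}^K_+$. I would assign a free multiplier $\alpha_j$ to the normalization $\sum_k \pi_{jk}=1$; nonnegative multipliers $\delta_j^1,\delta_j^2$ to the upper and lower sides of the $\ell_1$ mean constraint $|\sum_k \pi_{jk}d_k-\mu_j(y)|\le \epsilon_j^\mu$; and nonnegative multipliers $\gamma_j^1,\gamma_j^2$ to the upper and lower second-moment bounds. Taking the LP dual then yields exactly the objective \eqref{eq:objNonlinearFull}, in which $\delta_j^1$ and $\gamma_j^1$ carry the positive (upper-bound) right-hand sides $\mu_j(y)+\epsilon_j^\mu$ and $(\sigma_j^2(y)+(\mu_j(y))^2)\overline{\epsilon}_j^\sigma$, while $\delta_j^2,\gamma_j^2$ carry the negated (lower-bound) right-hand sides. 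The single dual constraint associated with each primal variable $\pi_{jk}$ reads $\alpha_j+(\delta_j^1-\delta_j^2)d_k+(\gamma_j^1-\gamma_j^2)d_k^2 \ge \theta_{jk}(y)$, which is \eqref{eq:DualConstraint1before}. The care here is purely bookkeeping: orienting the two-sided mean and variance constraints consistently so that the lower-bound multipliers enter with a minus sign.

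To justify replacing the inner maximum by its dual minimum, I would invoke LP strong duality. Boundedness of the primal is immediate since its feasible region lies in the probability simplex, hence is compact, and the linear objective attains a finite maximum whenever $U(y)$ is nonempty; nonemptiness follows from the standing assumptions $0\le\underline{\epsilon}_j^\sigma\le 1\le\overline{\epsilon}_j^\sigma$ together with the compatibility of the moment functions \eqref{eq:momentFunctions} with a distribution supported on $\mathcal{K}$. With strong duality in hand for every $(j,y)$, the inner maximum equals the dual minimum pointwise in $y$, so the outer $\min_{y\in\mathcal{Y}}$ and the inner dual minimization can be merged into a single joint minimization over $(y,\alpha,\delta^1,\delta^2,\gamma^1,\gamma^2)$, producing \eqref{eq:allFormulationv1}.

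The main obstacle I anticipate is not the duality computation itself but the treatment of $\theta_{jk}(y)$ in the right-hand side of \eqref{eq:DualConstraint1before}. Because $\theta_{jk}(y)$ is itself the inner maximum over $i^*\in\{0,1,\dots,|I|\}$ from Proposition~\ref{propInnerProbReformulation}, the constraint $\alpha_j+(\delta_j^1-\delta_j^2)d_k+(\gamma_j^1-\gamma_j^2)d_k^2\ge \theta_{jk}(y)$ is equivalent to imposing one linear inequality for each candidate $i^*$ (since dominating the maximum is the same as dominating every term), and the statement records only the maximizing index $i_{jk}^*$. I would make explicit that this expansion preserves exactness, and note that the objective \eqref{eq:objNonlinearFull} remains nonlinear in $y$ through the products $\delta_j^1\mu_j(y)$, $(\mu_j(y))^2$, and $\sigma_j^2(y)$; linearizing these bilinear and piecewise-linear terms is deferred to the subsequent convex-envelope reformulation rather than being part of this theorem.
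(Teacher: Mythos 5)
Your proposal is correct and follows essentially the same route as the paper: substitute the closed form from Proposition~\ref{propInnerProbReformulation}, write the worst-case expectation as a linear program in $\pi$ over the expanded constraints of $U(y)$, dualize with exactly the multipliers $\alpha_j,\delta_j^1,\delta_j^2,\gamma_j^1,\gamma_j^2$, and merge the dual minimization with the outer minimization over $y\in\mathcal{Y}$. Your added remarks---the per-site decoupling, the compactness argument for strong duality, and the observation that dominating $\theta_{jk}(y)$ is equivalent to one inequality per candidate $i^*$ (which is precisely how the paper later passes to constraint~\eqref{eq:constraintNonlinearFull})---are consistent with, and if anything slightly more explicit than, the paper's own proof.
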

\begin{proof}
Following Proposition \ref{propInnerProbReformulation}, we can reformulate the inner problem $\max_{\pi \in U(y)} \mathbf{E} [h(y,d(y))]$ for a given $y$ as
\begin{subequations} \label{eq:innerProblem}
\begin{alignat}{1}
\max_{\pi_{jk}, j \in J, k = 1, \cdots, K} \quad &  \sum_{j \in J} \sum_{k = 1}^K \pi_{jk} \left( (c_{i_{jk}^* j} - r_j) d_k + \sum_{i \in I: c_{ij} < c_{i_{jk}^* j}} C_i y_i (c_{ij} - c_{i_{jk}^* j})  \right) \\
\text{s.t.} \quad &  \sum_{k =1}^K \pi_{jk} = 1 \quad \forall j \in J, \\
& \sum_{k =1}^K \pi_{jk} d_k \leq \mu_j(y) + \epsilon^\mu_j \quad \forall j \in J, \\
& \sum_{k =1}^K \pi_{jk} d_k \geq \mu_j(y) - \epsilon^\mu_j \quad \forall j \in J, \\
& \sum_{k =1}^K \pi_{jk} d_k^2 \leq (\sigma_j^2(y) + (\mu_j(y))^2) \overline{\epsilon}^\sigma_j\quad \forall j \in J, \\
& \sum_{k =1}^K \pi_{jk} d_k^2 \geq (\sigma_j^2(y) + (\mu_j(y))^2) \underline{\epsilon}^\sigma_j\quad \forall j \in J, \\
& \pi_{jk} \geq 0 \quad \forall j \in J, k = 1, \cdots, K.
\end{alignat}
\end{subequations}

Let $\alpha_j, \ \delta_j^1, \ \delta_j^2, \ \gamma_j^1, \ \gamma_j^2 $ for all $j \in J$ be the dual variables associated with all the constraints in model \eqref{eq:innerProblem}. Then, we can formulate the corresponding dual of model \eqref{eq:innerProblem} as
\begin{subequations} \label{eq:innerDualProblem}
\begin{alignat}{1}
\min_{\alpha, \delta^1, \delta^2, \gamma^1, \gamma^2} \quad & \sum_{j \in J} \bigg( \alpha_j + \delta_j^1 (\mu_j(y) + \epsilon^\mu_j) - \delta_j^2 (\mu_j(y) - \epsilon^\mu_j) \notag \\
& \qquad \qquad \qquad + \gamma_j^1 (\sigma_j^2(y) + (\mu_j(y))^2) \overline{\epsilon}^\sigma_j- \gamma_j^2 (\sigma_j^2(y) + (\mu_j(y))^2) \underline{\epsilon}^\sigma_j\bigg) \\
\text{s.t.} \quad & \alpha_j + (\delta_j^1 - \delta_j^2) d_k + (\gamma_j^1 - \gamma_j^2) d_k^2 \geq \theta_{jk}(y) \quad \forall j \in J, k = 1, \cdots, K, \\
& \delta_j^1, \gamma_j^1, \delta_j^2, \gamma_j^2 \geq 0 \quad \forall j \in J.
\end{alignat}
\end{subequations}
As a result, we can express model \eqref{eq:distRobustFormulation} in the form of \eqref{eq:allFormulationv1}. This completes the proof. 
\end{proof}

Model~\eqref{eq:allFormulationv1} is a mixed-integer nonlinear program due to the nonlinear objective function \eqref{eq:objNonlinearFull}. To linearize it, we assume upper bounds $\overline{\delta^1},  \overline{\delta^2},  \overline{\gamma^1},  \overline{\gamma^2}$ on the variables $\delta^1, \delta^2, \gamma^1, \gamma^2$, respectively. Using these bounds, McCormick envelopes can be applied for linearizing the bilinear terms in the objective function \eqref{eq:objNonlinearFull} \citep{McCormick1976}. Specifically, we define set $M'_{(\underline{\eta}, \overline{\eta})}$ involving the McCormick inequalities for linearizing any bilinear term $w' = \eta z$ when $\eta \in [\underline{\eta}, \overline{\eta}]$ and $z \in \{0, 1\}$  and give the details as follows.
\begin{equation} \label{eq:MccormickSetBilinear}
M'_{(\underline{\eta}, \overline{\eta})} = \Bigl\{(w', \eta, z) \in \mathbb{R}^3: \eta - (1 - z) \overline{\eta} \leq w' \leq \eta - \underline{\eta} (1 - z), \underline{\eta} z \leq w' \leq \overline{\eta} z\Bigr\}.
\end{equation}
Because variable $z$ is binary valued, we have an exact reformulation in \eqref{eq:MccormickSetBilinear} for representing the bilinear terms. Similarly, we define set $M''_{(\underline{\eta}, \overline{\eta})}$ involving McCormick inequalities for linearizing any trilinear term $w'' = \eta z_1 z_2$ when $\eta \in [\underline{\eta}, \overline{\eta}]$ such that $\underline{\eta} \geq 0$, and $z_1, z_2 \in \{0, 1\}$ as follows. 
\begin{align} \label{eq:MccormickSetTrilinear}
M''_{(\underline{\eta}, \overline{\eta})} = \Bigl\{(w'',  \eta, & z_1, z_2 ) \in  \mathbb{R}^4: w'' \leq \overline{\eta} z_1, w'' \leq \overline{\eta} z_2, w'' \leq \eta - \underline{\eta}(1 - z_1), w'' \leq \eta - \underline{\eta}(1 - z_2), \notag \\
& w'' \geq \underline{\eta}(-1 + z_1 + z_2), w'' \geq \eta + \overline{\eta}(-2 + z_1 + z_2), z_1 \leq 1, z_2 \leq 1, \underline{\eta} \leq \eta \leq \overline{\eta} \Bigr\}.
\end{align}

The discussed trilinear case \eqref{eq:MccormickSetTrilinear} involves two binary variables, and based on existing results, we confirm that it provides an exact reformulation in the following proposition. 
\begin{prop} \citep{Meyer2004} Let $0 \leq \underline{\eta} \leq \overline{\eta}$. Then  $M''_{(\underline{\eta}, \overline{\eta})} = conv\Bigl(\Bigl\{(w, \eta, z_1, z_2): w = \eta z_1 z_2, \eta \in [\underline{\eta}, \overline{\eta}], z_1, z_2 \in \{0, 1\} \Bigr\}\Bigr)$.
\end{prop}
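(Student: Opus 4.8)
The plan is to prove the stated identity by the two opposite inclusions, after making the right-hand side geometrically explicit. Write $S := \{(w,\eta,z_1,z_2) : w = \eta z_1 z_2,\ \eta \in [\underline{\eta},\overline{\eta}],\ z_1,z_2 \in \{0,1\}\}$. Fixing the four binary values of $(z_1,z_2)$ exhibits $S$ as a union of four line segments: for the three choices with $z_1 z_2 = 0$ the segment is $\{(0,\eta,z_1,z_2) : \eta\in[\underline{\eta},\overline{\eta}]\}$, and for $(z_1,z_2)=(1,1)$ it is $\{(\eta,\eta,1,1) : \eta\in[\underline{\eta},\overline{\eta}]\}$. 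Hence $\mathrm{conv}(S)$ is a polytope whose vertices are the eight endpoints obtained by additionally fixing $\eta\in\{\underline{\eta},\overline{\eta}\}$.

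First I would dispatch the inclusion $\mathrm{conv}(S)\subseteq M''_{(\underline{\eta},\overline{\eta})}$. Since the right-hand side is an intersection of half-spaces it is convex, so it suffices to check $S\subseteq M''_{(\underline{\eta},\overline{\eta})}$, and since each segment is the convex hull of its endpoints it suffices to substitute the eight vertices into the six nontrivial inequalities. Each check is a one-line computation using only $0\le\underline{\eta}\le\overline{\eta}$ and $\eta\in[\underline{\eta},\overline{\eta}]$; e.g.\ at $(\eta,\eta,1,1)$ the inequalities collapse to $\eta\le\overline{\eta}$, $\eta\ge\underline{\eta}$, $\eta\le\eta$ and $\eta\ge\eta$.

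The substantive direction is $M''_{(\underline{\eta},\overline{\eta})}\subseteq\mathrm{conv}(S)$, which I would handle by the Balas disjunctive (extended-formulation) representation of the convex hull of a union of polyhedra followed by projection. A point lies in $\mathrm{conv}(S)$ iff there are weights $\lambda_{00},\lambda_{10},\lambda_{01},\lambda_{11}\ge 0$ with $\sum\lambda_{ab}=1$ and disaggregated flows $\mu_{ab}\in[\underline{\eta}\lambda_{ab},\overline{\eta}\lambda_{ab}]$ satisfying $z_1=\lambda_{10}+\lambda_{11}$, $z_2=\lambda_{01}+\lambda_{11}$, $\eta=\sum_{ab}\mu_{ab}$, and $w=\mu_{11}$ (only the $(1,1)$ segment carries mass into $w$). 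Eliminating the weights in favour of the single free parameter $t:=\lambda_{11}$ gives $\lambda_{10}=z_1-t$, $\lambda_{01}=z_2-t$, $\lambda_{00}=1-z_1-z_2+t$, so $\lambda\ge 0$ confines $t$ to $[\max(0,z_1+z_2-1),\ \min(z_1,z_2)]$, while the flow ranges add $\underline{\eta} t\le w\le\overline{\eta} t$ and $\underline{\eta}(1-t)\le\eta-w\le\overline{\eta}(1-t)$. The whole question thus reduces to one-dimensional feasibility: the admissible interval for $t$ is nonempty iff every lower bound is at most every upper bound, and I would verify that these pairings reproduce the listed inequalities — $w/\overline{\eta}\le\min(z_1,z_2)$ yields $w\le\overline{\eta}z_1$ and $w\le\overline{\eta}z_2$; pairing the weight bound $z_1+z_2-1$ against the two upper bounds $w/\underline{\eta}$ and $1-(\eta-w)/\overline{\eta}$ coming from the flow ranges yields $w\ge\underline{\eta}(z_1+z_2-1)$ and $w\ge\eta+\overline{\eta}(z_1+z_2-2)$; and pairing the lower bound $1-(\eta-w)/\underline{\eta}$ against $z_1,z_2$ yields the two mixed inequalities $w\le\eta-\underline{\eta}(1-z_1)$ and $w\le\eta-\underline{\eta}(1-z_2)$. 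This is exactly Fourier--Motzkin elimination of $t$ (and of the $\mu_{ab}$) from the extended system.

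I expect the main obstacle to be the bookkeeping in this last step: confirming that the sixteen ``lower $\le$ upper'' pairings collapse to precisely the six listed inequalities together with the box bounds, with no facet lost. The delicate point is the sign restriction $w\ge 0$, which is active on the three segments lying in $\{w=0\}$ and surfaces in the projection through pairings such as $0\le w/\underline{\eta}$; one must verify it is consistent with, or supplied by, the ambient McCormick constraints. The degenerate case $\underline{\eta}=0$ should be treated separately, and is in fact reassuring: there the bounds $t\le w/\underline{\eta}$ and $t\ge 1-(\eta-w)/\underline{\eta}$ degenerate into the one-sided conditions $w\ge 0$ and $w\le\eta$, and the inequality $w\ge\underline{\eta}(-1+z_1+z_2)$ itself becomes $w\ge 0$, so the nonnegativity facet is captured intrinsically. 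An alternative route that sidesteps the explicit projection is to show directly that $M''_{(\underline{\eta},\overline{\eta})}$ is bounded and that every one of its vertices has $z_1,z_2\in\{0,1\}$, whence the vertex lies on one of the four segments and the single-variable McCormick envelope is tight there, forcing it into $S$; but establishing boundedness and vertex-integrality is itself the crux of that approach.
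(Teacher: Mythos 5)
The paper never proves this proposition---it is quoted from \citet{Meyer2004} without argument---so your attempt can only be judged on its own terms. Your easy direction is fine: $M''$ is an intersection of half-spaces, hence convex, and checking the eight segment endpoints is routine. Your machinery for the hard direction (Balas' extended formulation for the union of the four segments, elimination of the weights down to the single parameter $t$, aggregation of the three flow intervals, and Fourier--Motzkin elimination of $t$) is also set up correctly.

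The gap is exactly at the step you flagged as ``the delicate point,'' and it cannot be repaired, because the statement as transcribed in the paper is false when $\underline{\eta}>0$. Carrying out your own elimination, the pairing of the lower bound $t\ge 0$ against the upper bounds $t\le z_1$, $t\le z_2$, $t\le w/\underline{\eta}$ produces the inequalities $z_1\ge 0$, $z_2\ge 0$, and $w\ge 0$; the definition of $M''_{(\underline{\eta},\overline{\eta})}$ contains no lower bounds on $z_1,z_2,w$, and for $\underline{\eta}>0$ none of these is implied by the listed constraints. Concretely, for any $0<\underline{\eta}\le\overline{\eta}$ the point $(w,\eta,z_1,z_2)=(-\underline{\eta}/2,\,\underline{\eta},\,0,\,0)$ satisfies every inequality defining $M''_{(\underline{\eta},\overline{\eta})}$, yet every point of $\mathrm{conv}(S)$ has $w\ge 0$ (since $w=\eta z_1z_2\ge 0$ on $S$); likewise $(-1,1,-\tfrac12,\tfrac12)\in M''_{(1,2)}$ violates $z_1\ge 0$. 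So the sixteen pairings do \emph{not} collapse to the six listed inequalities plus box bounds, and $M''\not\subseteq\mathrm{conv}(S)$. What your elimination actually establishes, if completed, is $\mathrm{conv}(S)=M''_{(\underline{\eta},\overline{\eta})}\cap\{w\ge 0\}$ for $\overline{\eta}>0$ (given $w\ge 0$, the constraints $w\le\overline{\eta}z_i$ supply $z_i\ge 0$, and $w\le\eta$, $w\ge\eta-\overline{\eta}$ follow from the listed inequalities). The stated identity holds precisely in the degenerate case you set aside, $\underline{\eta}=0$, where the fifth inequality $w\ge\underline{\eta}(-1+z_1+z_2)$ itself becomes $w\ge 0$---which, notably, is the only case the paper ever invokes, namely $M''_{(0,\overline{\gamma_j^h})}$. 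Your write-up, however, asserts the collapse for general $0\le\underline{\eta}\le\overline{\eta}$ rather than checking it, and in that generality the proof cannot go through without amending $M''$ by the constraint $w\ge 0$.
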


Motivated by carsharing application, we study the case when the market capacity is sufficiently large such that the mean of the random demand at each customer site $j$ is not restricted by $\mu_j^{UB}$ in Equation~\eqref{eq:momentFunctions}. Similarly, we omit the lower bound restriction $(\sigma_j^{LB})^{2}$ for the second-moment information. However, these assumptions are not restrictive in terms of the complexity of the problem formulation. In the presence of these upper and lower bounds, we can model the moment functions~\eqref{eq:momentFunctions} as piecewise linear functions with additional binary variables. The arising nonlinear relationships can be further linearized using McCormick envelopes. 

Following the above assumptions, the ambiguity set $U(y)$ in \eqref{eq:ambiguitySet} contains  nonlinear terms in $y$ if using mean and standard deviation functions defined in \eqref{eq:momentFunctions}. Specifically,  
\begin{subequations}
\begin{align}
(\mu_j(y))^2 & = \bar{\mu}^2_j \left(1 + 2 \sum_{j' \in I} \lambda^\mu_{jj'} y_{j'} + \sum_{j' \in I} (\lambda^\mu_{jj'})^2 y^2_{j'} + 2 \sum_{l = 1}^{|I|} \sum_{m = 1}^{l - 1} \lambda^\mu_{jl} \lambda^\mu_{jm} y_l y_m \right) \\
& = \bar{\mu}^2_j \left(1 + \sum_{j' \in I} ( 2 \lambda^\mu_{jj'} + (\lambda^\mu_{jj'})^2) y_{j'} + 2 \sum_{l = 1}^{|I|} \sum_{m = 1}^{l - 1} \lambda^\mu_{jl} 
\lambda^\mu_{jm} y_l y_m \right). \label{eq:muSquareDefinition}
\end{align}
\end{subequations}
To linearize the above expression, define a new variable $Y_{lm} := y_l y_m$ where $(Y_{lm}, y_l, y_m) \in M'_{(0,1)}$. To linearize the nonlinear terms in the objective function \eqref{eq:objNonlinearFull}, let $\Delta^h_{jj'} := \delta^h_{j} y_{j'}$, $\Gamma^h_{jj'} := \gamma^h_{j} y_{j'}$, $\Psi^h_{jlm} := \gamma^h_{j} y_{l} y_{m}$, for $h = 1, \ 2$. For any pair of $j \in J$ and  $j' \in I$, denote $\Lambda_{jj'} :=  -\bar{\sigma}_j^2 \lambda^\sigma_{jj'} + \bar{\mu}^2_j (2\lambda^\mu_{jj'} + (\lambda^\mu_{jj'})^2)$ as the parameters specific to the values of $\lambda^\mu$, $\lambda^\sigma$, as well as empirical moment estimates for any pair $j \in J, \ j' \in I$. Combining the above result with Theorem \ref{reformulationTheorem}, we derive a mixed-integer linear programming reformulation \eqref{eq:allFormulationv2} of model \eqref{eq:allFormulationv1} under ambiguity set~\eqref{eq:ambiguitySet} in the following theorem. 
\begin{thm}
Problem \eqref{eq:distRobustFormulation} is equivalent to the following mixed-integer linear program  \eqref{eq:allFormulationv2}. 
\begin{subequations} \label{eq:allFormulationv2}
\begin{alignat}{1}
%\min \quad & f^\top y + \sum_{j \in I} \bigg( \alpha_j + \delta_j^1 (\bar{\mu}_j + \epsilon^\mu_j) + \bar{\mu}_j \sum_{j' \in J} \lambda^\mu_{jj'} \Delta^1_{jj'} - \delta_j^2 (\bar{\mu}_j - \epsilon^\mu_j) -  \bar{\mu}_j \sum_{j' \in J} \lambda^\mu_{jj'} \Delta^2_{jj'}\notag \\
%& \qquad +\gamma^1_j \overline{\epsilon}^\sigma_j(\bar{\sigma}_j^2 + \bar{\mu}^2_j) +  \overline{\epsilon}^\sigma_j\sum_{j' \in I}  \Lambda_{jj'}  \Gamma^1_{jj'} + 2 \bar{\mu}^2_j \overline{\epsilon}^\sigma_j\sum_{l = 1}^{|I|} \sum_{m = 1}^{l - 1} \lambda^\mu_{jl} \lambda^\mu_{jm} \Psi^1_{jlm} \notag \\
%& \qquad  - \gamma^2_j \underline{\epsilon}^\sigma_j(\bar{\sigma}_j^2 + \bar{\mu}^2_j) -  \underline{\epsilon}^\sigma_j\sum_{j' \in I}  \Lambda_{jj'}  \Gamma^2_{jj'} - 2 \bar{\mu}^2_j \underline{\epsilon}^\sigma_j\sum_{l = 1}^{|I|} \sum_{m = 1}^{l - 1} \lambda^\mu_{jl} \lambda^\mu_{jm} \Psi^2_{jlm} \bigg) \\
\min \quad & f^\top y + \sum_{j \in J} \bigg( \alpha_j + \delta_j^1 (\bar{\mu}_j + \epsilon^\mu_j) - \delta_j^2 (\bar{\mu}_j - \epsilon^\mu_j) + \bar{\mu}_j \sum_{j' \in I} \lambda^\mu_{jj'}( \Delta^1_{jj'} - \Delta^2_{jj'}) \notag \\
& + (\bar{\sigma}_j^2 + \bar{\mu}^2_j) (\overline{\epsilon}^\sigma_j\gamma^1_j  - \underline{\epsilon}^\sigma_j \gamma^2_j) +  \sum_{j' \in I}  \Lambda_{jj'}  (\overline{\epsilon}^\sigma_j\Gamma^1_{jj'} - \underline{\epsilon}^\sigma_j \Gamma^2_{jj'}) + 2 \bar{\mu}^2_j \sum_{l = 1}^{|I|} \sum_{m = 1}^{l - 1} \lambda^\mu_{jl} \lambda^\mu_{jm} (\overline{\epsilon}^\sigma_j\Psi^1_{jlm} - \underline{\epsilon}^\sigma_j\Psi^2_{jlm}) \bigg) \label{eq:finalModelObj} \\
\text{s.t.} \quad & \alpha_j + (\delta_j^1 -\delta_j^2) d_k + (\gamma_j^1 - \gamma_j^2) d_k^2 \geq (c_{i^*j} - r_j) d_k + \sum_{i \in I: c_{ij} < c_{i^*j}} C_i y_i (c_{ij} - c_{i^*j}) \notag \\
& \hspace{6.5cm} \forall i^* \in I \cup \{0\}, j \in I, k = 1, \cdots, K \label{eq:constraintNonlinearFull} \\
%& d_{(1)} d_{(2)} - (d_{(1)} + d_{(2)}) (\bar{\mu}_j (1 + \sum_{j' \in I} \lambda^\mu_{jj'} y_{j'}) - \epsilon^\mu_j) + \Theta_j \overline{\epsilon}^\sigma_j\geq 0 \quad \forall j \in J \label{eq:validConstr1} \\
%& d_{(K-1)} d_{(K)} - (d_{(K-1)} + d_{(K)}) (\bar{\mu}_j (1 + \sum_{j' \in I} \lambda^\mu_{jj'} y_{j'}) - \epsilon^\mu_j) + \Theta_j \overline{\epsilon}^\sigma_j\geq 0 \quad \forall j \in J \label{eq:validConstr2} \\ 
%& - d_{(1)} d_{(K)} + (d_{(1)} + d_{(K)}) (\bar{\mu}_j (1 + \sum_{j' \in I} \lambda^\mu_{jj'} y_{j'}) + \epsilon^\mu_j) - \Theta_j \underline{\epsilon}^\sigma_j\geq 0 \quad \forall j \in J \label{eq:validConstr3} \\
%& \Theta_j = \bar{\sigma}_j^2 + \bar{\mu}^2_j + \sum_{j' \in I} \Lambda_{jj'} y_{j'} + 2 \bar{\mu}^2_j\sum_{l = 1}^{|I|} \sum_{m = 1}^{l - 1} \lambda^\mu_{jl} \lambda^\mu_{jm} Y_{lm} \quad \forall j \in J \label{eq:validConstr4}\\
%& (Y_{lm}, y_l, y_m) \in M'_{(0,1)} \quad \forall l = 1, \dots, |I|, l > m \label{eq:validConstr5}\\
& (\Delta^h_{jj'}, \delta^h_{j}, y_{j'}) \in M'_{(0,\overline{\delta_j^h})}, (\Gamma^h_{jj'}, \gamma^h_{j},  y_{j'}) \in M'_{(0,\overline{\gamma_j^h})} \quad \forall j \in J, j' \in I, h = 1,2 \\
& (\Psi^h_{jlm}, \gamma^h_{j}, y_{l}, y_{m}) \in M''_{(0,\overline{\gamma_j^h})} \quad \forall j \in J, l = 1, \dots, |I|, l > m \\
& y \in \mathcal{Y} \subseteq \{0, 1\}^{|I|}, \delta_j^1, \gamma_j^1, \delta_j^2, \gamma_j^2 \geq 0 \quad \forall j \in J.
\end{alignat}
\end{subequations}
\end{thm}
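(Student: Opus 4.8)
The plan is to start from the mixed-integer nonlinear reformulation \eqref{eq:allFormulationv1} already established in Theorem \ref{reformulationTheorem} and carry out two independent reductions: an exact linearization of the objective \eqref{eq:objNonlinearFull}, and a max-expansion of the dual constraint \eqref{eq:DualConstraint1before}. Under the stated large-market assumptions the inner and outer bounds in \eqref{eq:momentFunctions} are inactive, so $\mu_j(y) = \bar\mu_j(1+\sum_{j'\in I}\lambda^\mu_{jj'}y_{j'})$ and $\sigma_j^2(y)=\bar\sigma_j^2(1-\sum_{j'\in I}\lambda^\sigma_{jj'}y_{j'})$ are affine in $y$, and $(\mu_j(y))^2$ expands as in \eqref{eq:muSquareDefinition}; the only nonlinearities that remain after substitution are products of a continuous dual variable with one or two of the binary location variables.

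First I would substitute the affine expressions for $\mu_j(y)$ and $\sigma_j^2(y)$ into the mean-related part of \eqref{eq:objNonlinearFull}. The constant part yields $\delta_j^1(\bar\mu_j+\epsilon_j^\mu)-\delta_j^2(\bar\mu_j-\epsilon_j^\mu)$, while the $y$-dependent part produces the bilinear terms $\delta_j^h y_{j'}$, which I relabel as $\Delta^h_{jj'}$, giving the first line of \eqref{eq:finalModelObj}. For the second-moment part I would form $\sigma_j^2(y)+(\mu_j(y))^2$, collecting its affine-in-$y$ coefficient into $\Lambda_{jj'}=-\bar\sigma_j^2\lambda^\sigma_{jj'}+\bar\mu_j^2(2\lambda^\mu_{jj'}+(\lambda^\mu_{jj'})^2)$ and keeping the quadratic term $2\bar\mu_j^2\sum_{l}\sum_{m<l}\lambda^\mu_{jl}\lambda^\mu_{jm}y_ly_m$. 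Multiplying through by $\gamma_j^1\overline\epsilon_j^\sigma$ and $\gamma_j^2\underline\epsilon_j^\sigma$ generates the bilinear terms $\gamma_j^h y_{j'}$ and the trilinear terms $\gamma_j^h y_l y_m$, which I relabel as $\Gamma^h_{jj'}$ and $\Psi^h_{jlm}$; grouping the $\gamma^1$ and $\gamma^2$ contributions reproduces the second line of \eqref{eq:finalModelObj}. Each relabeling is then made exact by attaching the appropriate McCormick set: $(\Delta^h_{jj'},\delta^h_j,y_{j'})\in M'_{(0,\overline{\delta_j^h})}$ and $(\Gamma^h_{jj'},\gamma^h_j,y_{j'})\in M'_{(0,\overline{\gamma_j^h})}$ for the bilinear products, and $(\Psi^h_{jlm},\gamma^h_j,y_l,y_m)\in M''_{(0,\overline{\gamma_j^h})}$ for the trilinear ones.

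For the constraints I would rewrite \eqref{eq:DualConstraint1before}. By Proposition~\ref{propInnerProbReformulation}, the right-hand side $\theta_{jk}(y)$ is the maximum over $i^*\in I\cup\{0\}$ of the linear-in-$y$ expression in \eqref{eq:closedFormInnerHDecomposed}; since a scalar dominates a maximum if and only if it dominates every term of that maximum, the single constraint $\alpha_j+(\delta_j^1-\delta_j^2)d_k+(\gamma_j^1-\gamma_j^2)d_k^2\ge\theta_{jk}(y)$ is equivalent to the family \eqref{eq:constraintNonlinearFull} indexed by all $i^*\in I\cup\{0\}$. This removes the need to identify the $y$-dependent maximizer $i^*_{jk}$ and leaves each constraint affine in $y$, so no further linearization is needed there.

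The exactness of the whole reduction is the only point requiring care, and it rests on two facts already available: the set $M'$ represents $w'=\eta z$ exactly whenever $z\in\{0,1\}$ (noted after \eqref{eq:MccormickSetBilinear}), and the cited Meyer (2004) proposition gives $M''$ as the convex hull of $\{w=\eta z_1z_2:\eta\in[\underline\eta,\overline\eta],z_1,z_2\in\{0,1\}\}$, so it represents $\eta z_1 z_2$ exactly at binary $(z_1,z_2)$. Here $y_l,y_m\in\{0,1\}$, so $\Psi^h_{jlm}=\gamma_j^h y_l y_m$ is genuinely a trilinear term with two binaries and $\gamma_j^h\ge0$, matching the hypotheses of $M''_{(0,\overline{\gamma_j^h})}$. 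The one assumption to flag is the existence of finite upper bounds $\overline{\delta_j^h},\overline{\gamma_j^h}$: since for each fixed $y$ the inner problem \eqref{eq:innerProblem} is feasible and bounded, its dual optimum is attained at a bounded point, so such bounds exist and do not cut off the optimal solution. With valid bounds in place, substituting the McCormick identities back into the objective and constraints yields exactly \eqref{eq:allFormulationv2}, completing the equivalence with \eqref{eq:distRobustFormulation}.
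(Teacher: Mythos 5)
Your proof follows essentially the same route as the paper's: substitute the affine moment functions and the expansion \eqref{eq:muSquareDefinition} into the objective \eqref{eq:objNonlinearFull}, replace the resulting bilinear and trilinear products by the McCormick variables $\Delta^h_{jj'}$, $\Gamma^h_{jj'}$, $\Psi^h_{jlm}$ with their exact envelopes, and expand the maximum defining $\theta_{jk}(y)$ into the constraint family \eqref{eq:constraintNonlinearFull} indexed by $i^* \in I \cup \{0\}$. You are in fact more explicit than the paper on the two points that genuinely need justification --- that dominating a maximum is equivalent to dominating every term of it (so the $y$-dependent maximizer $i^*_{jk}$ never has to be identified), and that finite upper bounds $\overline{\delta_j^h}, \overline{\gamma_j^h}$ exist which do not cut off the optimum (the paper simply assumes such bounds) --- so the proposal is correct.
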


\begin{proof}
We linearize formulation \eqref{eq:allFormulationv1} obtained in Theorem \ref{reformulationTheorem} to derive a mixed-integer linear programming reformulation. To this end, we first plug in the the decision-dependent moment information at each site $j$, $\mu_j(y)$ and $\sigma_j^2(y)$, into the objective function \eqref{eq:objNonlinearFull}, using definitions in \eqref{eq:momentFunctions} and \eqref{eq:muSquareDefinition}. As the resulting objective function includes bilinear and trilinear terms, we introduce new variables to obtain the linear objective function \eqref{eq:finalModelObj}. Constraint \eqref{eq:constraintNonlinearFull} corresponds to  \eqref{eq:DualConstraint1before}, which is also linearized. %The constraints \eqref{eq:validConstr1} - \eqref{eq:validConstr3} represent the valid inequalities derived in Proposition \ref{propValidIneq}. 
The remaining constraints refer to definitions of the newly introduced variables, their corresponding McCormick constraints in the forms of \eqref{eq:MccormickSetBilinear} and \eqref{eq:MccormickSetTrilinear}, restrictions on the facility-location variable $y$, and  the non-negativity constraints on all the decision variables. 
\end{proof}

\subsection{Valid inequalities}
\label{validInequalitiesSection}

Next, we examine the underlying problem structure for deriving valid inequalities to obtain a stronger formulation of the mixed-integer linear programming reformulation  \eqref{eq:allFormulationv2}. We first present an intermediate result using the inner problem \eqref{eq:innerProblem}. Since the dual \eqref{eq:innerDualProblem} of the inner problem is decomposable with respect to each location $j$, we study the following decomposed formulation for every $j \in J$.
\begin{subequations} \label{eq:innerDualProblemDecomposed}
%\begin{alignat}{1}
%\min_{\alpha_j, \delta_j, \gamma_j} \quad & \alpha_j + \delta_j \mu_j(y) + \gamma_j \sigma_j^2(y) \\
%\text{s.t.} \quad & \alpha_j + \delta_j d_k + \gamma_j d_k^2 \leq \theta_{jk}(y) \quad k = 1, \cdots, K.
%\end{alignat}
\begin{alignat}{1}
\min_{\alpha_j, \delta_j^1, \delta_j^2, \gamma_j^1, \gamma_j^2} \quad & \alpha_j + \delta_j^1 (\mu_j(y) + \epsilon^\mu_j) - \delta_j^2 (\mu_j(y) - \epsilon^\mu_j) + \gamma_j^1 (\sigma_j^2(y) + (\mu_j(y))^2) \overline{\epsilon}^\sigma_j\notag \\
& \qquad \qquad \qquad \qquad \qquad \qquad \qquad \qquad - \gamma_j^2 (\sigma_j^2(y) + (\mu_j(y))^2) \underline{\epsilon}^\sigma_j \label{eq:innerDualProblemDecomposedObj} \\
\text{s.t.} \quad & \alpha_j + (\delta_j^1 - \delta_j^2) d_k + (\gamma_j^1 - \gamma_j^2) d_k^2 \geq \theta_{jk}(y) \quad k = 1, \cdots, K, \label{eq:DualConstraint1} \\
& \delta_j^1, \gamma_j^1, \delta_j^2, \gamma_j^2 \geq 0. \label{eq:DualConstraint2}
\end{alignat}
\end{subequations}

\begin{lem} \label{extremeRayProp}
Extreme rays of the feasible set $\{(\alpha_j, \delta_j^1, \delta_j^2, \gamma_j^1, \gamma_j^2): \eqref{eq:DualConstraint1}, \eqref{eq:DualConstraint2}\}$ are
\begin{enumerate}
\item $(d_{(1)} d_{(2)}, 0, d_{(1)} + d_{(2)}, 1, 0)$
\item $(d_{(K-1)} d_{(K)}, 0, d_{(K-1)} + d_{(K)}, 1, 0)$
\item $(- d_{(1)} d_{(K)}, d_{(1)} + d_{(K)}, 0, 0, 1)$
\end{enumerate}
where $d_{(1)}, \cdots, d_{(K)}$ represent the ordered sequence of the support of the random demand. 
\end{lem}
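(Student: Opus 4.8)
The plan is to work with the recession cone of the polyhedron defined by \eqref{eq:DualConstraint1}--\eqref{eq:DualConstraint2}, since the extreme rays of a polyhedron coincide with those of its recession cone and are therefore independent of the right-hand sides $\theta_{jk}(y)$. Dropping those right-hand sides, the recession cone is
\[ R = \Bigl\{(\alpha_j,\delta_j^1,\delta_j^2,\gamma_j^1,\gamma_j^2)\in\mathbb{R}^5 : \alpha_j + (\delta_j^1-\delta_j^2)d_{(k)} + (\gamma_j^1-\gamma_j^2)d_{(k)}^2 \geq 0 \ \forall k,\ \delta_j^1,\delta_j^2,\gamma_j^1,\gamma_j^2\geq 0\Bigr\}. \]
I would first note that $R$ is pointed: any direction in its lineality space must have $\delta_j^1=\delta_j^2=\gamma_j^1=\gamma_j^2=0$ and then $\alpha_j=0$, so $R$ is generated by its extreme rays. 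The central device is to encode a direction through the univariate quadratic $q(d)=(\gamma_j^1-\gamma_j^2)d^2+(\delta_j^1-\delta_j^2)d+\alpha_j$; constraint \eqref{eq:DualConstraint1} then says exactly that $q$ is nonnegative at every support point $d_{(1)},\dots,d_{(K)}$. I will use the standard criterion that a nonzero $r\in R$ spans an extreme ray if and only if the constraints active at $r$ have rank $4$, one less than the ambient dimension.

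For the forward direction I would verify the three listed vectors directly. Substituting each into $q$ produces a factored quadratic: candidates~1 and~2 give the upward parabolas $q(d)=(d-d_{(1)})(d-d_{(2)})$ and $q(d)=(d-d_{(K-1)})(d-d_{(K)})$, whose roots are the extreme adjacent support points, while candidate~3 gives the downward parabola $q(d)=-(d-d_{(1)})(d-d_{(K)})$ with the two global-extreme roots. Using the ordering $d_{(1)}\leq\cdots\leq d_{(K)}$, each such $q$ is nonnegative on the whole support, so all three lie in $R$. To confirm each spans an extreme ray I would exhibit four linearly independent active constraints: the two demand constraints at the roots of $q$, with normals $(1,d_{(r)},-d_{(r)},d_{(r)}^2,-d_{(r)}^2)$ that are independent by a Vandermonde argument, together with the two nonnegativity constraints indexed by the two vanishing components of the vector (for instance $\delta_j^1=0$ and $\gamma_j^2=0$ for candidate~1). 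A short elimination using the two unit normals to clear the corresponding columns reduces the two demand normals to $(1,-d_{(r)},d_{(r)}^2)$, which are independent, giving rank~$4$.

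For the converse I would argue as follows. A sum/perturbation argument along the directions $(0,1,1,0,0)$ and $(0,0,0,1,1)$ shows that, away from those degenerate kernel directions, an extreme ray has at most one of $\{\delta_j^1,\delta_j^2\}$ and at most one of $\{\gamma_j^1,\gamma_j^2\}$ strictly positive; hence the leading and linear coefficients of $q$ carry definite signs. Since $q$ is quadratic it vanishes at most twice on the support, so reaching rank~$4$ forces exactly two support roots plus two active sign constraints. Nonnegativity of $q$ on the entire ordered support then pins down the geometry: an upward parabola can touch zero only at adjacent extreme support points, and a downward parabola only at the endpoints $d_{(1)},d_{(K)}$, which are precisely the three listed configurations. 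I expect the main obstacle to be this converse, and in particular the linear-independence bookkeeping introduced by the sign-split lifting $\delta_j=\delta_j^1-\delta_j^2$ and $\gamma_j=\gamma_j^1-\gamma_j^2$: one must track carefully which nonnegativity constraints are active, confirm the rank count in each sign pattern, and separate out the two trivial kernel rays $(0,1,1,0,0)$ and $(0,0,0,1,1)$, which always yield nonnegative objective contributions in \eqref{eq:innerDualProblemDecomposedObj} and hence produce no informative inequality.
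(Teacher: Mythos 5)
Your core device is the same as the paper's: identify a recession direction with the quadratic $q(d)=(\gamma_j^1-\gamma_j^2)d^2+(\delta_j^1-\delta_j^2)d+\alpha_j$ and characterize when $q\geq 0$ at every support point. The difference is that you stay in the lifted five-dimensional space and use the rank-$4$ criterion, while the paper first collapses to $(\alpha_j,\delta_j,\gamma_j)$ with $\delta_j=\delta_j^1-\delta_j^2$, $\gamma_j=\gamma_j^1-\gamma_j^2$ and hunts for two tight demand constraints in $\mathbb{R}^3$. Your forward direction is correct and more careful than the paper's, and your identification of $(0,1,1,0,0)$ and $(0,0,0,1,1)$ as extreme rays of the lifted cone is a genuine point that the paper's reduction silently discards.

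The gap is in your converse, and it is essentially the same gap as in the paper's own proof, so you have reproduced it rather than repaired it. Two steps fail. First, rank $4$ does not force ``exactly two support roots plus two active sign constraints'': your sign-pattern argument only shows that at least one member of each pair $\{\delta_j^1,\delta_j^2\}$ and $\{\gamma_j^1,\gamma_j^2\}$ vanishes, and the cases of three or four active sign constraints produce further extreme rays you never rule out, e.g.\ $(1,0,0,0,0)$ (constant $q$) and $(-d_{(1)},1,0,0,0)$ (affine $q(d)=d-d_{(1)}$, one support root plus three sign constraints), both of which pass the rank-$4$ test. Second, and fatally for the statement itself, an upward parabola nonnegative on the support need not vanish at $\{d_{(1)},d_{(2)}\}$ or $\{d_{(K-1)},d_{(K)}\}$; it only needs its two roots to form an \emph{adjacent} pair. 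For $K\geq 4$, the direction $(d_{(2)}d_{(3)},0,d_{(2)}+d_{(3)},1,0)$, i.e.\ $q(d)=(d-d_{(2)})(d-d_{(3)})$, lies in the cone and has a rank-$4$ active set (demand constraints at $d_{(2)},d_{(3)}$ plus $\delta_j^1=0$ and $\gamma_j^2=0$), hence is an extreme ray missing from the list. The paper commits exactly this error when it asserts there are ``only two ways'' to satisfy $(d_k-d_m)(d_k-d_n)\geq 0$ for all $k$: that condition is imposed pointwise in $k$ and permits support points on both sides of the root pair. So the lemma as stated is incomplete and cannot be proved; what your forward direction does establish --- and all that Proposition \ref{propValidIneq} actually needs --- is that the three listed vectors are recession directions of \eqref{eq:innerDualProblemDecomposed}, so that each inequality in \eqref{eq:validInequalities} is valid.
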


% edit to here
\begin{proof}
Since $\delta_j := \delta_j^1 - \delta_j^2$ and $\gamma_j := \gamma_j^1 - \gamma_j^2$ are unbounded, we can equivalently consider the following system of inequalities in place of \eqref{eq:DualConstraint1} and  \eqref{eq:DualConstraint2}
\begin{equation} \label{eq:innerDualProblemDecomposedSmaller}
\alpha_j + \delta_j d_k + \gamma_j d_k^2 \geq \theta_{jk}(y) \quad k = 1, \cdots, K. 
\end{equation}
To identify extreme rays, we solve the inequality system \eqref{eq:extremeRaySystem} for $m, \ n \in \{1, \cdots, K\}$;
\begin{subequations} \label{eq:extremeRaySystem}
\begin{alignat}{1}
\alpha_j + \delta_j d_m + \gamma_j d_m^2 & = 0 \label{eq:extremeRayEq1} \\
\alpha_j + \delta_j d_n + \gamma_j d_n^2 & = 0 \label{eq:extremeRayEq2} \\
\alpha_j + \delta_j d_k + \gamma_j d_k^2 & \geq 0 \quad k \in \{1, \cdots, K\} \setminus \{ m, n\}. \label{eq:extremeRayIneq}
\end{alignat}
\end{subequations}
Without loss of generality, we assume that $d_m < d_n$. Solving the equalities \eqref{eq:extremeRayEq1} and \eqref{eq:extremeRayEq2}, we obtain $\delta_j = - (d_m + d_n) \gamma_j$, and $\alpha_j = d_m d_n \gamma_j$. The next step is to ensure that the inequality system \eqref{eq:extremeRayIneq} is satisfied. We study two cases with respect to the direction  $\gamma_j$ as follows by normalizing $|\gamma_j| = 1$.
\begin{enumerate}
\item \underline{$\gamma_j = 1$}: In this case, we need to guarantee that $(d_k - d_m) (d_k - d_n) \geq 0$ for all $k \in \{1, \cdots, K\} \setminus \{ m, n\}$. Consequently, we have either $d_k \geq d_m$ and $d_k \geq d_n$, or $d_k \leq d_m$ and $d_k \leq d_n$. There are only two ways to satisfy these restrictions, resulting in the following extreme ray generators of the form $(\alpha_j, \delta_j, \gamma_j)$:
\begin{itemize}
\item $(d_{(1)} d_{(2)}, - (d_{(1)} + d_{(2)}), 1)$;
\item $(d_{(K-1)} d_{(K)}, - (d_{(K-1)} + d_{(K)}), 1)$.
\end{itemize}
\item \underline{$\gamma_j = -1$}: In this case, we need to ensure that $(d_k - d_m) (d_k - d_n) \leq 0$ for all $k \in \{1, \cdots, K\} \setminus \{ m, n\}$. This requires that $d_m \leq d_k \leq d_n$. To satisfy this case, we have the extreme ray generator
\begin{itemize}
\item $(-d_{(1)} d_{(K)}, d_{(1)} + d_{(K)}, -1)$.
\end{itemize}
\end{enumerate}
Lastly, through converting the resulting extreme ray generators to the original variables of the form $(\alpha_j, \delta_j^1, \delta_j^2, \gamma_j^1, \gamma_j^2)$ using $\delta_j^1 = \max\{0,\delta_j\}$, $\delta_j^2 = \max\{0,-\delta_j\}$,  $\gamma_j^1 = \max\{0,\gamma_j\}$, $\gamma_j^2 = \max\{0,-\gamma_j\}$, we obtain the desired result. This completes the proof. 
\end{proof}

Building on Proposition \ref{extremeRayProp}, we derive valid inequalities for the original problem \eqref{eq:distRobustFormulation} as follows.

\begin{prop} \label{propValidIneq}
The following inequalities are valid for problem \eqref{eq:distRobustFormulation}:
\begin{subequations} \label{eq:validInequalities}
\begin{alignat}{2}
&d_{(1)} d_{(2)} - (d_{(1)} + d_{(2)}) (\mu_j(y) - \epsilon^\mu_j) + (\sigma_j^2(y) + (\mu_j(y))^2) \overline{\epsilon}^\sigma_j \geq 0 \quad & \forall j \in  J \\
&d_{(K-1)} d_{(K)} - (d_{(K-1)} + d_{(K)}) (\mu_j(y) - \epsilon^\mu_j) + (\sigma_j^2(y) + (\mu_j(y))^2) \overline{\epsilon}^\sigma_j \geq 0 \quad & \forall j \in J \\ 
&- d_{(1)} d_{(K)} + (d_{(1)} + d_{(K)}) (\mu_j(y) + \epsilon^\mu_j) - (\sigma_j^2(y) + (\mu_j(y))^2) \underline{\epsilon}^\sigma_j \geq 0 \quad & \forall j \in J
\end{alignat}
\end{subequations}
\end{prop}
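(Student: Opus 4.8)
The plan is to recognize the three inequalities in \eqref{eq:validInequalities} as the boundedness conditions of the location-decomposed dual \eqref{eq:innerDualProblemDecomposed}, and to generate them by evaluating its objective \eqref{eq:innerDualProblemDecomposedObj} along the extreme rays catalogued in Lemma \ref{extremeRayProp}. The underlying fact is that the worst-case inner value $\max_{\pi \in U(y)} \mathbf{E}_\pi[h(y,d(y))]$ is meaningful precisely when the ambiguity set $U(y)$ is nonempty. For such $y$ the inner primal \eqref{eq:innerProblem} is feasible, and because its objective is an expectation against probability weights it is bounded above; by the same strong-duality argument used to derive \eqref{eq:innerDualProblem}, the dual \eqref{eq:innerDualProblemDecomposed} therefore attains a finite minimum for every $j \in J$.

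First I would invoke the standard characterization that a minimization linear program over a nonempty polyhedron is bounded below if and only if its cost vector makes a nonnegative inner product with every recession direction, for which it suffices to test the extreme rays. The feasible region $\{(\alpha_j,\delta_j^1,\delta_j^2,\gamma_j^1,\gamma_j^2): \eqref{eq:DualConstraint1}, \eqref{eq:DualConstraint2}\}$ is pointed---the nonnegativity of $\delta_j^1,\delta_j^2,\gamma_j^1,\gamma_j^2$ together with constraint \eqref{eq:DualConstraint1} forbids any line---so its recession directions are generated by the three extreme rays of Lemma \ref{extremeRayProp}, which do not depend on $y$ since the right-hand sides $\theta_{jk}(y)$ affect only the location, not the homogeneous part, of the region. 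It then remains to substitute each generator into \eqref{eq:innerDualProblemDecomposedObj} and require nonnegativity. For instance, the ray $(d_{(1)}d_{(2)},0,d_{(1)}+d_{(2)},1,0)$ picks up $d_{(1)}d_{(2)}$ from the $\alpha_j$-term, $-(d_{(1)}+d_{(2)})(\mu_j(y)-\epsilon^\mu_j)$ from the $\delta_j^2$-term, and $(\sigma_j^2(y)+(\mu_j(y))^2)\overline{\epsilon}^\sigma_j$ from the $\gamma_j^1$-term, which is exactly the first inequality of \eqref{eq:validInequalities}; the remaining two rays reproduce the second and third inequalities in the same mechanical fashion.

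Because these three conditions are equivalent to boundedness of the dual and, by linear programming duality, to nonemptiness of $U(y)$, they hold at every $y$ with a well-defined worst case, in particular at any optimal solution of \eqref{eq:distRobustFormulation}; appending them to \eqref{eq:allFormulationv2} therefore removes no relevant solution while ruling out the unboundedness that an empty ambiguity set would induce in the reformulated minimization. As a sanity check I would also read each inequality in primal terms: the first is $\mathbf{E}_\pi[(d-d_{(1)})(d-d_{(2)})] \geq 0$, automatic because $(d-d_{(1)})(d-d_{(2)}) \geq 0$ on the ordered support, combined with the moment bounds $\mathbf{E}_\pi[d] \geq \mu_j(y)-\epsilon^\mu_j$ and $\mathbf{E}_\pi[d^2] \leq (\sigma_j^2(y)+(\mu_j(y))^2)\overline{\epsilon}^\sigma_j$, which also explains why the extreme rays take the stated product-of-roots form. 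I expect the one genuinely delicate point to be conceptual rather than computational: one must justify that a decision with $U(y)=\emptyset$ drives the inner maximum to the supremum over an empty set and so is never chosen by the outer minimization, so that restricting attention to decisions satisfying \eqref{eq:validInequalities} is fully consistent with the worst-case semantics of \eqref{eq:distRobustFormulation}. The substitution into \eqref{eq:innerDualProblemDecomposedObj} along the rays of Lemma \ref{extremeRayProp} is then routine.
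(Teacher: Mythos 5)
Your proposal is correct and takes essentially the same route as the paper: plug the extreme rays of Lemma~\ref{extremeRayProp} into the decomposed dual objective~\eqref{eq:innerDualProblemDecomposedObj} and use LP duality to tie boundedness of the dual~\eqref{eq:innerDualProblemDecomposed} to nonemptiness of $U(y)$, which yields exactly the three inequalities~\eqref{eq:validInequalities}. Your additions---pointedness of the recession cone, the primal reading $\mathbf{E}_\pi\bigl[(d-d_{(1)})(d-d_{(2)})\bigr]\geq 0$, and the explicit caveat about decisions with $U(y)=\emptyset$---only make rigorous what the paper leaves implicit, and your direction of the argument ($U(y)\neq\emptyset$ implies the inequalities) is in fact the one that literally establishes validity, whereas the paper emphasizes the converse (the inequalities guarantee feasibility of~\eqref{eq:innerProblem}).
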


\begin{proof}
First, consider the primal problem~\eqref{eq:innerProblem} and its dual problem~\eqref{eq:innerDualProblem}. Note that the dual model~\eqref{eq:innerDualProblem} is always feasible as we can let values of variables $\alpha_j$ be arbitrarily large. To ensure the feasibility of the primal problem, it suffices to demonstrate that the dual problem is bounded. To this end, we consider the decomposed dual subproblem~\eqref{eq:innerDualProblemDecomposed}, and use the extreme ray generators in Lemma~\ref{extremeRayProp} by plugging them into the objective function~\eqref{eq:innerDualProblemDecomposedObj}. The resulting inequalities ~\eqref{eq:validInequalities} ensure the boundedness of the dual problem~\eqref{eq:innerDualProblemDecomposed} to guarantee the feasibility of \eqref{eq:innerProblem}. This completes the proof. 
\end{proof}
 
We continue to linearize nonlinear terms in \eqref{eq:validInequalities} using Equation \eqref{eq:muSquareDefinition} and McCormick envelopes \eqref{eq:MccormickSetBilinear}. As a result, inequalities \eqref{eq:validInequalities} are equivalent to:
\begin{subequations} \label{linearValidIneqs}
\begin{alignat}{1}
& d_{(1)} d_{(2)} - (d_{(1)} + d_{(2)}) (\bar{\mu}_j (1 + \sum_{j' \in I} \lambda^\mu_{jj'} y_{j'}) - \epsilon^\mu_j) + \Theta_j \overline{\epsilon}^\sigma_j\geq 0 \quad \forall j \in J \label{eq:validConstr1} \\
& d_{(K-1)} d_{(K)} - (d_{(K-1)} + d_{(K)}) (\bar{\mu}_j (1 + \sum_{j' \in I} \lambda^\mu_{jj'} y_{j'}) - \epsilon^\mu_j) + \Theta_j \overline{\epsilon}^\sigma_j\geq 0 \quad \forall j \in J \label{eq:validConstr2} \\ 
& - d_{(1)} d_{(K)} + (d_{(1)} + d_{(K)}) (\bar{\mu}_j (1 + \sum_{j' \in I} \lambda^\mu_{jj'} y_{j'}) + \epsilon^\mu_j) - \Theta_j \underline{\epsilon}^\sigma_j\geq 0 \quad \forall j \in J \label{eq:validConstr3} \\
& \Theta_j = \bar{\sigma}_j^2 + \bar{\mu}^2_j + \sum_{j' \in I} \Lambda_{jj'} y_{j'} + 2 \bar{\mu}^2_j\sum_{l = 1}^{|I|} \sum_{m = 1}^{l - 1} \lambda^\mu_{jl} \lambda^\mu_{jm} Y_{lm} \quad \forall j \in J \label{eq:validConstr4}\\
& (Y_{lm}, y_l, y_m) \in M'_{(0,1)} \quad \forall l = 1, \dots, |I|, l > m \label{eq:validConstr5}
\end{alignat}
\end{subequations}
After integrating constraints~\eqref{linearValidIneqs} into the model~\eqref{eq:allFormulationv2}, we strengthen our formulation for the original problem~\eqref{eq:distRobustFormulation}. Later our computational studies are based on the formulation~\eqref{eq:allFormulationv2} with valid inequalities~\eqref{linearValidIneqs}, and we further demonstrate the efficiency of the proposed constraints in the next section. 

\section{Computational Studies}
\label{Computations}

We demonstrate the efficacy of the proposed decision-dependent distributionally robust (DDDR) approach from various aspects. We also compare its solutions and performance against facility location plans obtained from distributionally robust (DR) and stochastic programming (SP) approaches neglecting decision-dependency. 

To evaluate a location plan $\hat{y}$, we run out-of-sample test by using a benchmark stochastic programming model \eqref{eq:SPFormulation} in assessing potential solution performance. The Monte Carlo sampling approach and Sample Average Approximation method \citep[see][]{Kleywegt2002} are adopted for generating realizations of the underlying uncertainty in customer demand. Specifically, we consider a given set of demand scenarios $d^{\omega}_j(\hat{y})$ for all $\omega \in \Omega$, where the scenarios are generated based on plan $\hat{y}$ using the moment information defined in~\eqref{eq:momentFunctions}. For each scenario $\omega$, let $p^{\omega}$, $x^{\omega}_{ij}$ and $s^{\omega}_j$ be the probability of realizing the scenario, the amount of demand at customer site $j$ satisfied by facility at location $i$, and the unsatisfied demand at customer site $j$, respectively. A solution evaluation model is: 
\begin{subequations} 
\label{eq:SPFormulation}
\begin{alignat}{1}
\min_{x,s} \quad & \sum_{i \in I} f_i \hat{y}_i + \sum_{\omega \in \Omega} p^{\omega} \left( \sum_{i \in I} \sum_{j \in J} c_{ij} x^{\omega}_{ij} + \sum_{j \in J} \left( p_j  s^{\omega}_j - r_j d^{\omega}_j(\hat{y}) \right) \right) \label{eq:SPFormulationObj}\\ % c^\top x + p^\top s \\
\text{s.t.} \quad & \sum_{i \in I} x^{\omega}_{ij} + s^{\omega}_j = d^{\omega}_j(\hat{y}) \quad \forall j \in J, \omega \in \Omega \label{eq:SPFormulationDemandConstr} \\
& x_{ij}^{\omega} \leq C_i \hat{y}_i \quad \forall i \in I, j \in J, \omega \in \Omega \label{eq:SPFormulationCapacityConstr} \\
& s^{\omega}_i, x^{\omega}_{ij} \geq 0 \quad \forall i \in I, j \in J, \omega \in \Omega.
\end{alignat}
\end{subequations}
The objective function \eqref{eq:SPFormulationObj} minimizes the total expected cost of facility location, transportation, and unmet demand minus the revenue obtained. Constraint \eqref{eq:SPFormulationDemandConstr} ensures that demand is either satisfied or penalized across all the scenarios while constraint \eqref{eq:SPFormulationCapacityConstr} guarantees that the capacity of each facility location is not violated. For an independently and identically distributed set of scenarios, model \eqref{eq:SPFormulation} is decomposable by scenario when the value $\hat{y}$ of the first-stage decision vector $y$ is given. In this case, model \eqref{eq:SPFormulation} can be solved separately for each scenario subproblem. 

In the remainder of the section, we first discuss experimental settings used in our numerical studies in Section~\ref{expSettingsSection}. Then we provide a comprehensive analysis of the proposed approach on various test cases in Section~\ref{caseStudySubsection}  including different (i) variability levels of demand, (ii) unit penalty costs, (iii) robustness levels, (iv) limits on the number of open facilities, and (v) decision-dependent distribution models. Finally, we highlight the computational efficiency of the DDDR model by conducting experiments using different sizes of instances in Section \ref{compPerfSection}. 

\subsection{Experimental Setup}
\label{expSettingsSection}

We randomly generate a set of potential facility locations and customer sites. We first present the default settings for all the problem parameters, which remain the same  throughout all the numerical studies, unless otherwise stated. Euclidean distance is used to represent the distance between each candidate facility location and customer site. These distance values are assumed to directly affect transportation cost parameters, namely $c_{ij}$ for all $i \in I$, $j \in J$. The parameters for the fixed opening cost, $f_i$, and capacity, $C_i$, for all $i \in I$ are sampled from Uniform distributions $U(5000, 10000)$ and $U(10,20)$, respectively. Furthermore, for each $j \in J$, we set unit penalty, $p_j$, for the unmet demand as 225, and revenue parameter $r_j$ as 150 for each customer site $j \in J$.
 
We sample the empirical mean of demand at each customer site $j \in J$, ${\bar{\mu}_j}$, from a Uniform distribution $U(20, 40)$. Then, we let $\bar{\sigma}_j = {\bar{\mu}_j}$, implying the coefficient of variation equaling to 1. %in the default setting. %We use coefficient of variation for generating variance information of each customer site. Specifically, we set $\frac{\bar{\sigma}_j}{\bar{\mu}_j} = 1$. %\frac{1}{\sqrt{10}}$. 
We define the moment-based ambiguity set by letting $\epsilon_j^\mu = 0$, and $\underline{\epsilon}^\sigma_j$ = $\overline{\epsilon}^\sigma_j$ = 1 in \eqref{eq:ambiguitySet}, for all customer sites $j \in J$. The support size of demand values at each customer site, namely $K$, is taken as 100 and thus the values $d_1, \cdots, d_K$ are in the range $\{1, \cdots, 100\}$. 

For establishing decision dependency between demand distribution and facility location decisions, we select the parameters $\lambda_{ji}^\mu$ and $\lambda_{ji}^\sigma$ for all $i \in I$, $j \in J$ using the distance between each facility location and customer pair. We consider them as a decreasing function of the corresponding distance, specifically $\exp(-c_{ij}/25)$. Consequently, the effect of a facility located at $i$ on the demand at customer site $j$ is higher when the facility is closer to the customer. Next, the sums of the vectors $\lambda_j^\mu$, $\lambda_j^\sigma$ are normalized for each customer site $j \in J$ to adjust the effect of the location decisions on demand. Note that if $\lambda^\mu_{j i}$ and $\lambda^\sigma_{j i}$ values are set to 0 for all $j \in J$ and $i \in I$ in the moment functions in \eqref{eq:momentFunctions}, then the current setting reduces to a decision-independent form, i.e., a traditional distributionally robust optimization model. 

To assess the performance of the proposed optimization framework by taking into account various choices of model parameters and underlying demand distribution, we provide an extensive set of numerical studies over the proposed and existing optimization approaches. We implement all the optimization models in Python using Gurobi 7.5.2 as the solver on an Intel i5-3470T 2.90 GHz machine with 8 GB RAM. 

\subsection{Numerical Results and Analyses}
\label{caseStudySubsection}

We first examine how facility location decisions are affected by the demand variability in  parameter choices, robustness levels, and modeling of decision-dependency in DDDR and other benchmark approaches. In particular, we consider location solutions given by SP, DR, and DDDR models over a set of diverse instances. For obtaining location solutions of a SP model, we generate training samples with 20 or 100 scenarios following a Normal distribution with mean and variance of the demand at each customer site $j \in J$ being $\bar{\mu}_j$ and $\bar{\sigma}_j^2$, respectively. We refer to instances of the two different sizes as SP(20) and SP(100), respectively. For evaluating facility location solutions, we generate 1000 test scenarios for each given solution. In particular, given a solution $\hat{y}$, we first obtain the values of the moment functions $\mu_j(\hat{y})$ and $\sigma^2_j(\hat{y})$ for each customer site $j \in J$ using \eqref{eq:momentFunctions}, and generate test scenarios based on these values following a certain distribution. In the default setting, we test our results by considering Normal distribution as the true representative of the underlying demand distribution but vary the distribution type in one set of tests later. 

Table \ref{objFncUnmetDemandComparisonTableAllReplications} shows the average optimal objective value and unmet demand value over 10 instances evaluated over the test scenarios for five instance sizes under different approaches. Specifically, we consider $|I| \in \{5, \cdots, 10\}$, and $|J| = 2|I|$. Since we minimize the total cost minus revenue, smaller objective values are preferred. The results demonstrate the superior performance of DDDR solutions over the ones of existing methodologies with the decision independent assumption on demand. For instance, the DDDR approach provides, on average, 18\% and 12\% improvement in profit, and 99\% and 96\% reduction in unmet demand, compared to SP and DR approaches over instances with 10 facilities, respectively. Consequently, the decision-independent approaches obtain less profit and provide a lower quality of service by not fully satisfying the demand. 

\begin{table}[h]
\centering
\caption{Average optimal objective and unmet demand values under different methodologies and instances.}
\label{objFncUnmetDemandComparisonTableAllReplications}
\begin{tabular}{crrrrr}
\hline
           &        $|I|$ &    SP(20) &   SP(100) &         DR &       DDDR \\
\hline
\multirow{6}{1.75cm}{\centering average optimal objective} &        5 &   $-12806.7$ &   $-12763.3$ &   $-7618.55$ &   $-22554.2$ \\  
 &          6 &   $-21460.7$ &   $-21483.7$ &   $-16425.1$ &   $-30298.1$ \\
 &          7 &   $-28201.8$ &   $-27810.6$ &   $-24911.6$ &   $-36608.5$ \\
 &          8 &   $-40914.9$ &   $-39206.7$ &   $-35810.4$ &   $-48027.2$ \\
 &          9 &   $-48247.1$ &   $-48790.2$ &   $-51503.5$ &   $-59816.9$ \\
 &        10 &  $-63281.7$ &   $-63337.3$ &   $-67084.4$ &   $-75164.8$ \\ 
\hline
\multirow{6}{1.75cm}{\centering average unmet demand} &  5 &       95.4 &       95.9 &      128.2 &       15.1 \\
&          6 &       75.7 &       74.7 &      110.6 &        3.9 \\
&          7 &       71.7 &       75.1 &       86.2 &        0.0 \\
&          8 &       56.8 &       67.8 &       82.1 &        0.1 \\
&          9 &       58.1 &       53.9 &       38.7 &        0.2 \\
&          10 &       47.0 &       46.9 &       11.1 &        0.4 \\
\hline
\end{tabular}  
\end{table}

We further provide a detailed case study by examining an instance with 10 candidate facility locations, and 20 demand sites. We visualize the customer sites and possible facility locations in Figure \ref{LocationFigureCustFacilities}. The customer sites are shown in circles, and the possible facility locations are denoted in squares. 

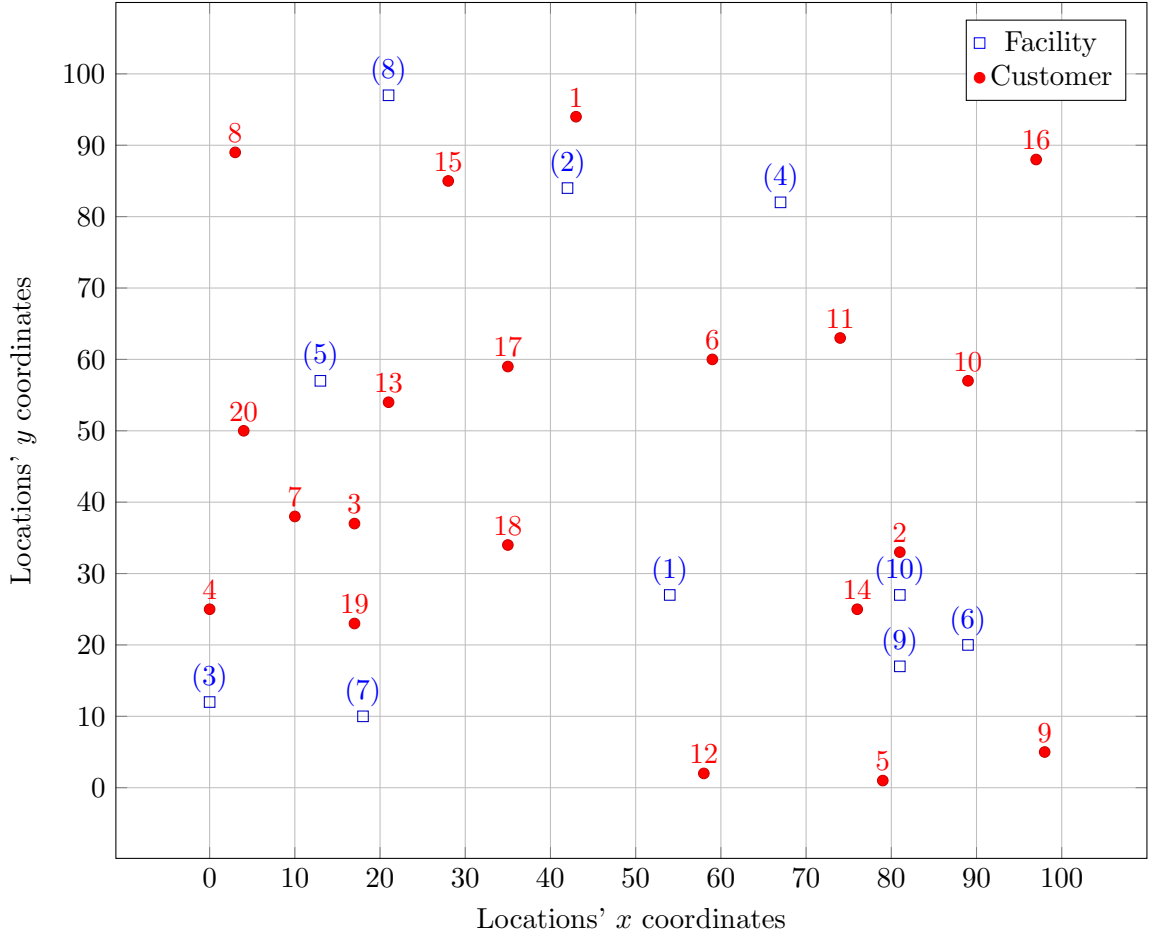
\begin{figure}[h]
\centering
\captionsetup{justification=centering}
\begin{tikzpicture}%[scale=1.25]
	\begin{axis}[%[enlargelimits=0.025]
	scale=2.0,
	xlabel={Locations' $x$ coordinates},
		ylabel={Locations' $y$ coordinates},
		ymajorgrids=true,
    xmajorgrids=true,
    xtick = {0,10,20,30,40,50,60,70,80,90,100},
    ytick = {0,10,20,30,40,50,60,70,80,90,100},
    xmax = 110, ymax=110
		]
		\addplot[
		  scatter,mark=square,only marks,
		  % we use 'point meta' as color data...
		  point meta=\thisrow{color},
		  color=blue,
		  % ... therefore, we can't use it as argument for nodes near coords ...
		  nodes near coords*={$(\pgfmathprintnumber[frac]\myvalue)$},
		  % ... which requires to define a visualization dependency:
		  visualization depends on={\thisrow{myvalue} \as \myvalue},
		] 
		table {
x	y	color 	myvalue
54	27	1	1
42	84	1	2
0	12	1	3
67	82	1	4
13	57	1	5
89	20	1	6
18	10	1	7
21	97	1	8
81	17	1	9
81	27	1	10
		};\addlegendentry{Facility}
		
				\addplot[
				scale=2.0,
		  scatter,mark=*,only marks,
		  color=red,
		  % we use 'point meta' as color data...
		  point meta=\thisrow{color},
		  % ... therefore, we can't use it as argument for nodes near coords ...
		  nodes near coords*={$\pgfmathprintnumber[frac]\myvalue$},
		  % ... which requires to define a visualization dependency:
		  visualization depends on={\thisrow{myvalue} \as \myvalue},
		] 
		table {
x	y	color 	myvalue
43	94	2	1
81	33	2	2
17	37	2	3
0	25	2	4
79	1	2	5
59	60	2	6
10	38	2	7
3	89	2	8
98	5	2	9
89	57	2	10
74	63	2	11
58	2	2	12
21	54	2	13
76	25	2	14
28	85	2	15
97	88	2	16
35	59	2	17
35	34	2	18
17	23	2	19
4	50	2	20
		};\addlegendentry{Customer};
	\end{axis}
\end{tikzpicture}
\caption{Locations of the customers and potential facilities.}
\label{LocationFigureCustFacilities}
\end{figure}

Then, Table \ref{objFncUnmetDemandComparisonTable} presents the results of DDDR, DR, and SP approaches under the default setting, where we report the average, standard deviation and percentile values of the optimal objective value and unmet demand value over the 1000 test scenarios. Overall, the DDDR approach provides the best results in terms of optimal objective and unmet demand values. DR is better than SP in terms of percentile values of the optimal objective and unmet demand. In addition, SP with different training data sizes present similar results. The table also demonstrates the importance of considering the decision-dependency as the DDDR approach outperforms DR in terms of profit and quality of service. 

\begin{table}[h]
\centering
\caption{Statistics of the optimal objective and unmet demand values under different methodologies  for a specific instance shown in Figure \ref{LocationFigureCustFacilities}.}
\label{objFncUnmetDemandComparisonTable}
\begin{tabular}{crrrr}
\hline
           & SP(20) & SP(100) &         DR &       DDDR \\
           \hline
       average opt.\ objective &   $-53581.0$ &   $-53468.2$ &   $-61443.7$ &   $-64375.0$ \\
  std. dev. &     6457.3 &     6712.0 &     6613.8 &     4917.8 \\
      95\% &   $-42448.0$ &   $-42643.8$ &   $-50474.8$ &   $-55845.2$ \\
      90\% &   $-44968.7$ &   $-44505.1$ &   $-52944.0$ &   $-58082.6$ \\
      75\% &  $-49367.8$ &   $-48939.7$ &   $-57056.9$ &   $-61178.4$ \\
      50\% &   $-53957.6$ &   $-53666.1$ &   $-61504.1$ &   $-64362.4$ \\
\hline
average unmet demand &       59.0 &       61.3 &        3.0 &        0.3 \\
  std. dev. &       35.1 &       35.6 &        6.8 &        2.3 \\
      95\% &      124.4 &      122.8 &       18.3 &        0.0 \\
      90\% &      105.6 &      107.9 &       11.4 &        0.0 \\
      75\% &       80.2 &       81.9 &        2.6 &        0.0 \\
      50\% &       54.4 &       58.0 &        0.0 &        0.0 \\
\hline
\end{tabular}  
\end{table}

\subsubsection{Effect of the variability in demand}
Next, we show how solutions produced by different models are affected by the variability of the underlying demand data. Figure \ref{coeffofVariationFigure} shows average optimal objective and unmet demand values over all the test instances, where the coefficient of variation is used for representing the demand variability. As the coefficient of variation used for estimating empirical mean and variance, namely $\frac{\bar{\sigma}_j}{\bar{\mu}_j}$ for demand at each customer site $j$, increases, the corresponding demand variability increases, assuming that the empirical mean is kept constant. Consequently, the robust approaches (DR and DDDR) become more suitable as compared to SP under higher variability as they obtain location plans that are more reliable to various demand patterns in the test scenarios. Moreover, SP is more sensitive to the underlying variability as the performance of its solutions monotonically worsens as demand variance increases. As the coefficient of variation decreases, the demand variability decreases and the performance of the stochastic and robust approaches become similar to each other. Moreover, the DDDR approach performs significantly better in all settings, highlighting the importance of considering decision dependency in uncertainty quantification. 

\begin{figure}[h]
\centering
\captionsetup{justification=centering}
\begin{subfigure}[b]{.45\textwidth}
  \centering
\begin{tikzpicture}[scale=0.85]
\begin{axis}[
	xlabel={Coefficient of variation (squared)},
	ylabel={Average objective function value},
	ymin = -65500, ymax=-50000,
	xmin = 0.01, xmax = 1,
	xtick = {0.01, 0.01, 0.1, 1},
	ytick = {-65000, -60000, -55000, -50000},
	 xmode = log,
     log basis x=10,
     legend pos=north west,
]
\addplot coordinates {
	(0.01, -58534.3) (0.1, -56365.2) (1, -53581.0)  
};

\addplot coordinates {
	(0.01, -58545.4) (0.1, -56331.3) (1, -53468.2)  
};

\addplot[mark=triangle,color=olive] coordinates {
	(0.01, -61443.7) (0.1, -56305.9) (1, -56824.3)  
};

\addplot coordinates {
	(0.01, -64642.4) (0.1, -64472.9) (1, -64375.0)  
};

\legend{SP(20),SP(100),DR,DDDR}
\end{axis}
\end{tikzpicture}
\end{subfigure}
\begin{subfigure}[b]{.45\textwidth}
  \centering
  \begin{tikzpicture}[scale=0.85]
\begin{axis}[
	xlabel={Coefficient of variation (squared)},
	ylabel={Average unmet demand},
	ymin = -2, ymax=60,
	xmin = 0.01, xmax = 1,
	xtick = {0.01, 0.01, 0.1, 1},
	ytick = {0,10,20,30,40,50,60},
	 xmode = log,
     log basis x=10,
     legend pos=north west,
]
\addplot coordinates {
	(0.01, 5.8) (0.1, 17.9) (1, 59)  
};

\addplot coordinates {
	(0.01, 6.0) (0.1, 18.9) (1, 61.3)  
};

\addplot[mark=triangle,color=olive] coordinates {
	(0.01, 5.8) (0.1, 18.1) (1, 3.0)  
};

\addplot coordinates {
	(0.01, 0.0) (0.1, 0.0) (1, 0.3)  
};

\legend{SP(20),SP(100),DR,DDDR}
\end{axis}
\end{tikzpicture}
\end{subfigure}
\caption{Effect of the variability level of demand on different methodologies.}
\label{coeffofVariationFigure}
\end{figure}
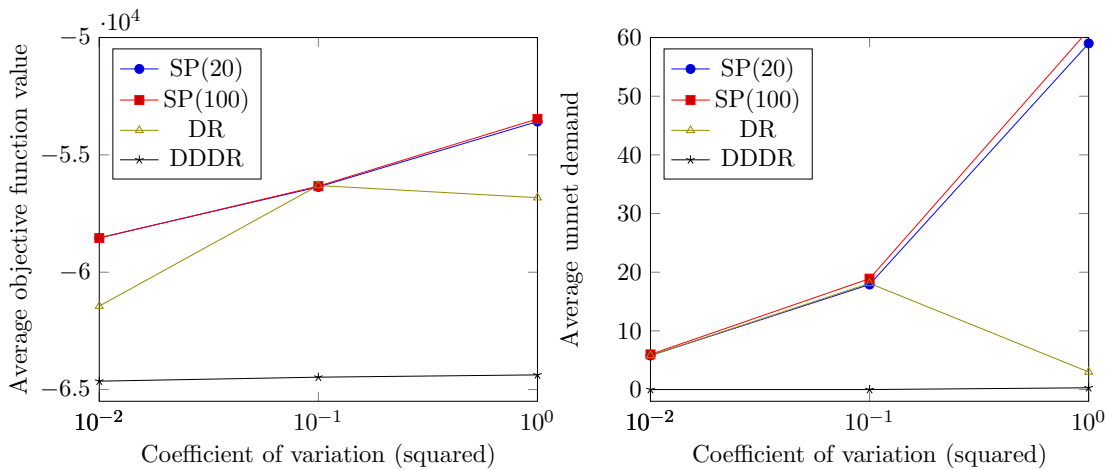

Next, we analyze the effect of misspecifying the true demand distribution by constructing a set of test scenarios, for each solution $\hat{y}$, using a Gamma distribution, where the scale parameter $\hat{\theta}^{\gamma}_j =\sigma^2_j(\hat{y})/\mu_j(\hat{y})$ and the shape parameter $\hat{k}^{\gamma}_j = \mu_j(\hat{y})/\hat{\theta}^{\gamma}_j$ for each customer site $j$. Table \ref{objFncUnmetDemandComparisonTableTestGamma} provides the corresponding results in comparison to Table \ref{objFncUnmetDemandComparisonTable}, where the solutions were tested over test scenarios following a Normal distribution. As Gamma distributions are more skewed, the percentile results worsen for all approaches. Moreover, SP cannot capture the changes in the underlying distribution, whereas DR and DDDR are not much impacted by these changes. The proposed DDDR approach again yields the best results in terms of average, standard deviation and percentile values of the optimal objective and unmet demand. 

\begin{table}[h]
\centering
\caption{Optimal objective and unmet demand values under different methodologies with  Gamma distribution.}
\label{objFncUnmetDemandComparisonTableTestGamma}
\begin{tabular}{crrrr}
\hline
           & SP(20) & SP(100) &         DR &       DDDR \\
\hline
       average opt.\ objective &   $-51962.2$ &   $-51627.2$ &   $-60575.0$ &   $-64268.5$ \\
  std. dev. &     5779.2 &     5951.9 &     6145.7 &     4694.6 \\
      95\% &   $-42495.9$ &   $-41383.7$ &   $-50788.8$ &   $-56590.5$ \\
      90\% &   $-44328.6$ &   $-43867.8$ &   $-52915.7$ &   $-58320.5$ \\
      75\% &   $-48110.2$ &   $-47766.7$ &   $-56590.1$ &   $-61069.6$ \\
      50\% &   $-51858.0$ &   $-51960.7$ &   $-60525.0$ &   $-64306.2$ \\
\hline
average unmet demand &       70.7 &       71.1 &        8.2 &        1.0 \\
  std. dev. &       47.5 &       46.8 &       16.8 &        5.0 \\
      95\% &      160.1 &      157.2 &       38.0 &        5.4 \\
      90\% &      137.4 &      136.3 &       23.9 &        0.0 \\
      75\% &       97.3 &       98.7 &        9.5 &        0.0 \\
      50\% &       61.2 &       62.8 &        0.0 &        0.0 \\
\hline
\end{tabular}
\end{table}

\subsubsection{Effect of unit penalty setting for unmet demand}

We examine the effect of the parameter setting for penalizing each unit of unmet demand. Table \ref{facilityLocationsComparisonTable} shows the facility location plans given by different approaches with unit penalty cost $p_j = 150, \ 225, \ 300$ for all $j \in J$. The case with $p_j = 225$ corresponds to facility location solutions in Table \ref{objFncUnmetDemandComparisonTable}, and $p_j = 150$ represents the case when the penalty parameter is equal to the revenue amount per unit. In the decision-dependent approach, the mean of the underlying demand increases as we open new facilities. Consequently, the DDDR model enforces opening more facility locations yielding higher demand and thus higher revenue. Furthermore, as unit penalty gets higher, it becomes more undesirable to have unmet demand. Thus, all approaches open more facilities when unit penalty cost increases. 

\begin{table}[h]
\centering
\caption{Facility location solutions given by different approaches for different $p_j$-values.}
\label{facilityLocationsComparisonTable}
\begin{tabular}{cccc}
\hline
           & \multicolumn{ 3}{c}{Open facility locations} \\
           \cline{2-4} 
           &  $p_j$ = 150 & $p_j$ = 225 &  $p_j$ = 300 \\
           \cline{2-4}
SP(20) &   1,5,7,10 &   1,5,7,10 & 1,5,6,7,10 \\
SP(100) &   1,5,7,10 &   1,5,7,10 & 1,5,6,7,10 \\
        DR &     1,7,10 & 1,3,5,6,7,10 & 1,3,4,5,6,7,10 \\
      DDDR & 1,4,5,6,7,10 & 1,2,4,5,6,7,9,10 & 1,2,4,5,6,7,9,10 \\
\hline
\end{tabular}  
%\begin{tabular}{ccccc}
%\hline
%Facilities & SP (20 scnearios) & SP (100 scnearios) &         DR &       DDDR \\
%\hline
%         1 &          1 &          1 &          1 &          1 \\
%         2 &            &            &            &          1 \\
%         3 &            &            &            &            \\
%         4 &            &            &          1 &          1 \\
%         5 &          1 &          1 &            &          1 \\
%         6 &            &            &          1 &          1 \\
%         7 &          1 &          1 &          1 &          1 \\
%         8 &            &            &            &            \\
%         9 &            &            &          1 &          1 \\
%        10 &          1 &          1 &          1 &          1 \\
%        \hline
%\end{tabular}  
\end{table}

Figure \ref{penaltyChangeFigure} shows how the average optimal objective and unmet demand values are affected by the changes in the penalty parameter. As the penalty parameter increases, the amount of unmet demand decreases for all approaches, as expected. When penalty parameter takes its smallest value, DR has the worst performance both in the optimal objective and unmet demand values, for which we provide a detailed explanation as follows. The DR approach compares two unfavorable cases: (i) opening many locations but having few customers, and (ii) not opening many locations and missing potential customers. By favoring the latter case, the DR solution loses customers by not having enough facilities open and neglecting the increase in the demand caused by the opening of new facilities. This effect can be also seen in Table \ref{facilityLocationsComparisonTable} as the DR approach opens fewer locations under small penalty values. On the other hand, the DDDR approach outperforms DR and SP in all settings, resulting in better optimal objective value and less unmet demand. 

\begin{figure}[h]
\centering
\captionsetup{justification=centering}
\begin{subfigure}[b]{.45\textwidth}
  \centering
\begin{tikzpicture}[scale=0.85]
\begin{axis}[
	xlabel={Penalty parameter},
	ylabel={Average objective function value},
	ymin = -65500, ymax=-48000,
	xmin = 150, xmax = 300,
	xtick = {150, 225, 300},
	ytick = {-65000, -60000, -55000, -50000},
     legend pos=north west,
]
\addplot coordinates {
	 (150, -58008.3)(225, -53581.0)(300, -57988.5)
};

\addplot coordinates {
	 (150, -58067.8)(225, -53468.2)(300, -57875.2)
};

\addplot[mark=triangle,color=olive] coordinates {
	 (150, -48738.3)(225, -56824.3)(300, -62529.5)
};

\addplot coordinates {
	 (150, -64890.7)(225, -64375.0)(300, -64350.9)
};

\legend{SP(20),SP(100),DR,DDDR}
\end{axis}
\end{tikzpicture}
\end{subfigure}
\begin{subfigure}[b]{.45\textwidth}
  \centering
  \begin{tikzpicture}[scale=0.85]
\begin{axis}[
	xlabel={Penalty parameter},
	ylabel={Average unmet demand},
	xmin = 150, xmax = 300,
	xtick = {150, 225, 300},
	ymin = -2, ymax=115,
	ytick = {0,10,20,30,40,50,60,115},
     legend pos=north west,
]
\addplot coordinates {
	 (150, 59.0) (225, 59.0) (300, 23.6)
};

\addplot coordinates {
	 (150, 61.3) (225, 61.3) (300, 25.1)
};

\addplot[mark=triangle,color=olive] coordinates {
	 (150, 114) (225, 6.9) (300, 0.3)
};

\addplot coordinates {
	(150, 1.6) (225, 0.3) (300, 0.3)
};

\legend{SP(20),SP(100),DR,DDDR}
\end{axis}
\end{tikzpicture}
\end{subfigure}
\caption{Effect of the penalty parameter on different methodologies.}
\label{penaltyChangeFigure}
\end{figure}
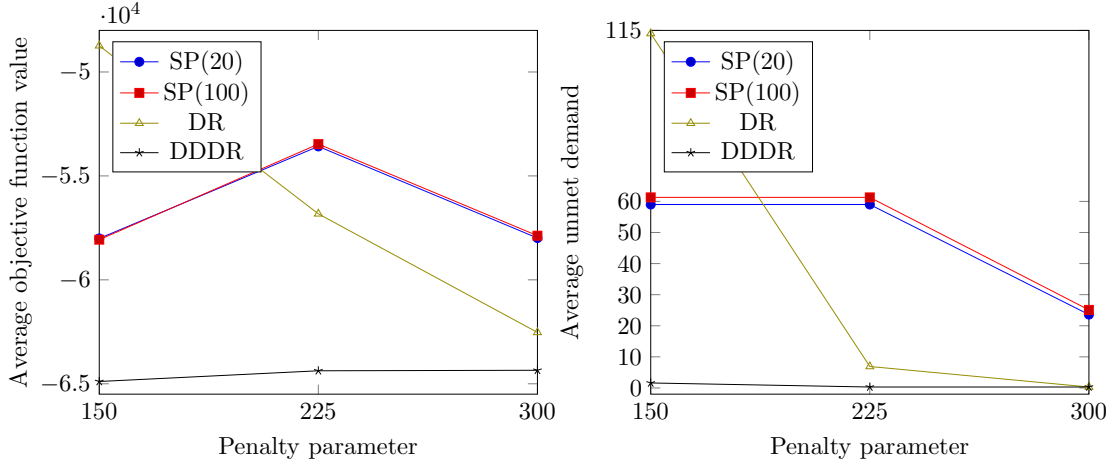

\subsubsection{Effect of the robustness level}

We now examine the effect of the robustness level of the ambiguity set \eqref{eq:ambiguitySet} on facility location solutions. We adjust the parameters $\epsilon^\mu_j$, $\underline{\epsilon}^\sigma_j$, $\overline{\epsilon}^\sigma_j$ for each customer site $j \in J$. Recall that, in the default setting, $\epsilon_j^\mu = 0$, and $\underline{\epsilon}^\sigma_j$ = $\overline{\epsilon}^\sigma_j= 1$ under the assumption of having the perfect knowledge about the underlying mean and variance parameters for each customer site. By adjusting these parameters, we construct models that are robust to different levels of uncertainty in the distribution parameters. 

To evaluate the resulting facility location solutions, we consider a different procedure for generating test scenarios. We first compute $\mu_j(\hat{y})$ and $\sigma^2_j(\hat{y})$ for each customer $j \in J$ given a location solution. Then, we sample the mean and variance parameters from the ranges $[(1-\epsilon^\mu_j)\mu_j(\hat{y}), (1+\epsilon^\mu_j)\mu_j(\hat{y})]$ and $[(1-\underline{\epsilon}^\sigma_j)\sigma_j(\hat{y}), (1+\overline{\epsilon}^\sigma_j)\sigma_j(\hat{y})]$, respectively. After that, we generate 100 Normally distributed scenarios  using the sampled mean and variance parameters. We repeat this procedure ten times to construct the set of test scenarios of size 1000, where each subset of scenarios has its own distribution. 

Table \ref{objFncUnmetDemandEpsilon0.2Table} shows the performance of different solution methodologies under 20\% level of robustness, where the level of robustness $0 \leq \kappa \leq 1$ implies $\epsilon^\mu_j = \kappa \mu_j(y)$, $\underline{\epsilon}^\sigma_j = 1- \kappa$, and $\overline{\epsilon}^\sigma_j = 1 + \kappa$ for every customer site $j$. As the level of robustness parameter $\kappa$ increases, we consider a wider range for the underlying uncertainty. Thus, distributionally robust approaches (i.e., DR and DDDR) become more pre-cautious to the increased ambiguity. On the other hand, SP solutions are not affected by these changes as they are trained with the same data and procedures. Consequently, the distributionally robust approaches perform better than SP under higher $\kappa$-values. Furthermore, the DDDR's results are less affected by the increased robustness, in terms of the optimal objective and unmet demand values, as compared to the default robustness setting, $\kappa = 0$, in Table \ref{objFncUnmetDemandComparisonTable}.

\begin{table}[h]
\centering
\caption{Optimal objective and unmet demand values when the level of robustness $\kappa= 20\%$.}
\label{objFncUnmetDemandEpsilon0.2Table}
\begin{tabular}{crrrr}
\hline
           & SP(20) & SP(100) &         DR &       DDDR \\
\hline
       average opt.\ objective &   $-51974.0$ &   $-52404.0$ &   $-60819.5$ &   $-63572.2$ \\
  std. dev. &     6783.4 &     6819.6 &     7295.4 &     5101.9 \\
      95\% &   $-40260.9$ &   $-41266.0$ &   $-48492.4$ &   $-54978.0$ \\
      90\% &   $-42887.1$ &   $-43644.5$ &   $-51692.9$ &   $-56970.0$ \\
      75\% &   $-47460.6$ &   $-47809.1$ &   $-55968.8$ &   $-60213.8$ \\
      50\% &   $-52170.1$ &   $-52780.4$ &   $-60927.2$ &   $-63639.1$ \\
\hline
average unmet demand &       61.6 &       63.5 &        3.6 &        0.5 \\
  std. dev. &       36.6 &       34.1 &        7.4 &        3.4 \\
      95\% &      129.0 &      127.0 &       20.8 &        1.3 \\
      90\% &      112.7 &      111.9 &       14.0 &        0.0 \\
      75\% &       82.7 &       84.7 &        3.9 &        0.0 \\
      50\% &       56.7 &       59.9 &        0.0 &        0.0 \\
\hline
\end{tabular}  
\end{table}

As the level of robustness increases, the set of test scenarios includes more variability. Due to this increased variability, all approaches have higher standard deviations and worsen percentile values for the optimal objective and unmet demand values over all test scenarios. Despite of this, the distributionally robust approaches (DR and DDDR) under $\kappa = 5\%$ and $10\%$ have the same facility location plans as when $\kappa = 20\%$. 

\subsubsection{Effect of the total number of facilities to open}

We compare solutions of DDDR, DR, and SP given a limit on the total number of facilities to open. We add a constraint to the optimization models, specifically to the polyhedron $\mathcal{Y}$, which restricts the total number of locations that can be selected. Figure \ref{numberofOpenFacilitiesFigure} summarizes the performance of each approach in terms of average optimal objective value and unmet demand value under different budget values on opening facilities. As the DDDR approach considers demand increase given by opening more facilities, adding such a limit hinders its capability of doing so. Consequently, the performance of the solutions of DDDR, DR, SP approaches becomes similar if given smaller facility-opening budget. On the other hand, as we relax this limitation, the DDDR approach outperforms the others, whereas SP is not affected by the relaxation. These results provide us managerial insights for better suitability of the decision-dependent demand distribution modeling in business settings, where there is less restriction on the maximum number of open facilities. 

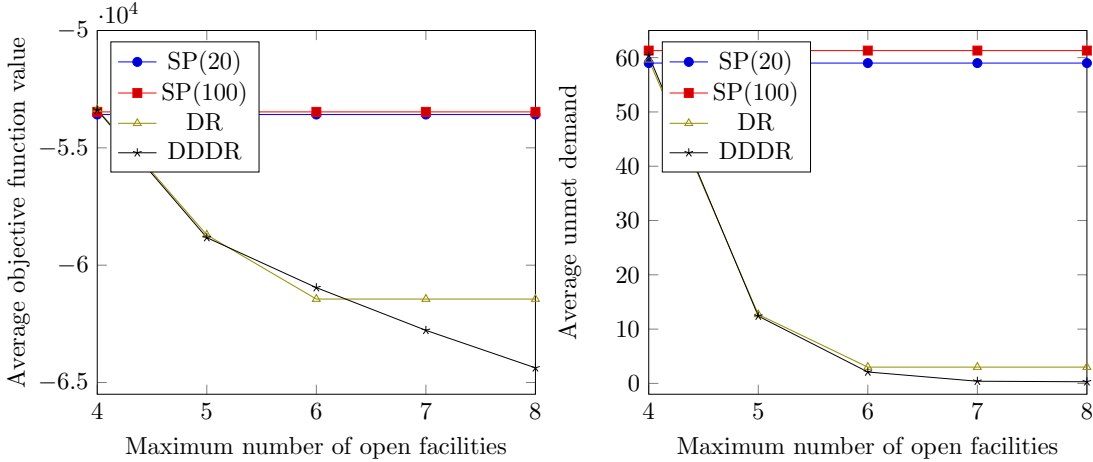
\begin{figure}[h]
\centering
\captionsetup{justification=centering}
\begin{subfigure}[b]{.45\textwidth}
  \centering
\begin{tikzpicture}[scale=0.85]
\begin{axis}[
	xlabel={Maximum number of open facilities},
	ylabel={Average objective function value},
	ymin = -65500, ymax=-50000,
	xmin = 4, xmax = 8,
	xtick = {4,5,6,7,8},
	ytick = {-65000, -60000, -55000, -50000},
     legend pos=north west,
]
\addplot coordinates {
	(4, -53581.0) (5, -53581.0) (6, -53581.0) (7, -53581.0) (8, -53581.0) 
};

\addplot coordinates {
	(4, -53468.2) (5, -53468.2) (6, -53468.2) (7, -53468.2) (8, -53468.2)
};

\addplot[mark=triangle,color=olive] coordinates {
	(4, -53356.9) (5, -58710.0) (6, -61443.7) (7, -61443.7) (8, -61443.7)
};

\addplot coordinates {
	(4, -53391.1) (5, -58825.3) (6, -60957.1) (7, -62776.0) (8, -64375.0)  
};

\legend{SP(20),SP(100),DR,DDDR}
\end{axis}
\end{tikzpicture}
\end{subfigure}
\begin{subfigure}[b]{.45\textwidth}
  \centering
  \begin{tikzpicture}[scale=0.85]
\begin{axis}[
	xlabel={Maximum number of open facilities},
	ylabel={Average unmet demand},
	ymin = -2, ymax=65,
	xmin = 4, xmax = 8,
	xtick = {4,5,6,7,8},
	ytick = {0,10,20,30,40,50,60},
     legend pos=north west,
]
\addplot coordinates {
	(4, 59.0) (5, 59.0) (6, 59.0) (7, 59.0) (8, 59.0) 
};

\addplot coordinates {
	(4, 61.3) (5, 61.3) (6, 61.3) (7, 61.3) (8, 61.3)
};

\addplot[mark=triangle,color=olive] coordinates {
	(4, 59.6) (5, 12.7) (6, 3.0) (7, 3.0) (8, 3.0)
};

\addplot coordinates {
	(4, 60.4) (5, 12.4) (6, 2.1) (7, 0.4) (8, 0.3)
};

\legend{SP(20),SP(100),DR,DDDR}
\end{axis}
\end{tikzpicture}
\end{subfigure}
\caption{Effect of the capacity on number of facilities to open.}
\label{numberofOpenFacilitiesFigure}
\end{figure}

\subsubsection{Effect of the form of decision-dependency}

We examine how DDDR results are affected by the modeling of location dependency on demand distributions. Recall that, in the default setting, we consider $\lambda_{ji}^\mu$, $\lambda_{ji}^\sigma$ for each pair of customer site $j$ and facility location $i$ as decreasing functions of their distance. As a comparison, we propose a clustering-based decision-dependency formulation for modeling the ambiguity set. In particular, in our \textit{$\rho$-means approach}, the demand at the customer site $j$ is equally affected by the opening of the closest $\rho$ facilities in its neighborhood. Let $\Rho^\rho_j$ be the set of $\rho$ facility locations that are closest to customer site $j$; define $\lambda_{ji}^\mu =$ $\lambda_{ji}^\sigma = $ $\frac{1}{\rho}$ for each customer site $j \in J$ and facility location $i \in \Rho^\rho_j$. As the facility locations $i \in I \setminus \Rho^\rho_j$ do not affect the demand at customer site $j$, their corresponding values are set to zero. 

We present the location solutions given by different approaches in Table \ref{FacilityLocationsUnderRhoMeans}. The distance-based approach corresponds to the default setting, and $\rho$-means approach is examined under different $\rho$ values. As all possible facility locations are considered in the distance-based approach with inversely proportional values with respect to their corresponding distances, most facilities are opened in this setting. For $\rho$-means approaches, the set of facilities to be open are affected by the choice of $\rho$. As $\rho$ gets larger, distances between customer and location pairs start to impact the demand less, and other factors such as opening cost of the locations may become more important. We note that $\rho = 10$ corresponds to an extreme case where all facilities equally affect the demand at any customer site. 

\begin{table}[h]
\centering
\caption{Facility location solutions of DDDR with different location-dependency patterns.}
\label{FacilityLocationsUnderRhoMeans}
\begin{tabular}{ccc}
\hline
\multicolumn{2}{c}{Modeling approach} & Open facility locations \\
\hline
distance-based &            & 1,2,4,5,6,7,9,10 \\
\hline
\multirow{5}{*}{$\rho$-means} &          1 & 1,4,5,6,7,8,10 \\
 &          2 & 1,2,3,4,5,7,10 \\
 &          3 & 1,2,3,5,6,7,10 \\
&          5 & 1,2,3,4,5,7,10 \\
& 10 & 1,3,4,5,6,7,10 \\
\hline
\end{tabular}  
%   $\rho$ & Open facility locations \\
%\hline
%         1 & 1,4,5,6,7,8,10 \\
%         2 & 1,2,3,4,5,7,10 \\
%         3 & 1,2,3,5,6,7,10 \\
%         5 & 1,2,3,4,5,7,10 \\ 
\end{table}

\subsection{Results of computational time}
\label{compPerfSection}

Lastly, we compare the solution-time performance of SP, DR and DDDR approaches for different instance sizes. Figure \ref{runtimeComparisonFigure} provides the run time for cases $|I| \in \{5, \cdots, 10\}$, and $|J| = 2|I|$. The run time denotes the average CPU time over 10 different randomly generated instances. In these replications, the default parameter configurations  and moment-based ambiguity sets are used as described in Section \ref{caseStudySubsection}. The distributionally robust approaches are more computationally expensive, whereas SP is the fastest. Furthermore, run time of the DDDR approach is more sensitive to the size of instances, despite of its better performance in terms of cost and demand  satisfaction. Also, the computational  time of DDDR model \eqref{eq:allFormulationv2} depends on the upper bounds of the dual variables, which are set to 100 for all experiments.

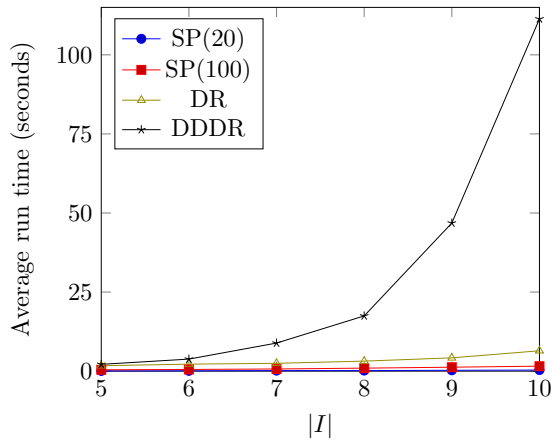
\begin{figure}[h]
  \centering
  \captionsetup{justification=centering}
\begin{tikzpicture}[scale=0.85]
\begin{axis}[
	xlabel={$|I|$},
	ylabel={Average run time (seconds)},
	ymin = 0, ymax=115,
	xmin = 5, xmax = 10,
	xtick = {5,6,7,8,9,10},
	ytick = {0,25,50,75,100},
     legend pos=north west,
]
\addplot coordinates {
	(5, 0.07)(6, 0.10)(7, 0.17)(8,	0.18)(9, 0.27)(10, 0.35)
};

\addplot coordinates {
	(5, 0.43)(6, 0.53)(7,0.69)(8,0.95)(9,1.25)(10,	1.57)
};

\addplot[mark=triangle,color=olive] coordinates {
	(5, 1.71)(6, 	2.21)(7,2.48)(8,	3.17)(9,	4.20)(10,	6.44)
};
\addplot coordinates {
	(5, 2.15)(6,	3.81)(7,	8.88)(8,	17.46)(9,	46.82)(10,	111.36)
};

\legend{SP(20),SP(100),DR,DDDR}
\end{axis}
\end{tikzpicture}
\caption{Run time comparison of different methodologies.}
\label{runtimeComparisonFigure}
\end{figure}

%\begin{figure}[htp]
%\centering
%\includegraphics[scale=0.75]{runtimeComparison.png}
%%\includegraphics[width=.95\textwidth]{run timeComparison.png}
%%\includegraphics[width=.3\textwidth]{figures/MultiStage.png}\hfill
%%\includegraphics[width=.3\textwidth]{figures/TwoStage.png}\hfill
%%\includegraphics[width=.3\textwidth]{figures/AdaptiveTwostage.png}
%\caption{Run time comparison of different methodologies.}
%\label{run timeComparisonPlot}
%\end{figure}

Next we examine the effect of the inclusion of valid inequalities \eqref{eq:validInequalities} to model \eqref{eq:allFormulationv2}. Table \ref{ValidIneqrun timeTable} provides the average run time comparison of two formulations over 10 randomly generated instances of different sizes. We present the speed-ups in comparison to the formulation~\eqref{eq:allFormulationv2} without the valid inequalities \eqref{eq:validConstr1}--\eqref{eq:validConstr3} and the corresponding additional variables and constraints \eqref{eq:validConstr4} and \eqref{eq:validConstr5}. These results illustrate the speed-up due to the proposed inequalities in the order of 3\%--19\% for different instances. 

\begin{table}
\centering
\caption{Effect of the valid inequalities on CPU time results.}
\label{ValidIneqrun timeTable}
%\begin{tabular}{ccc}
%\hline
%           & \multicolumn{ 2}{c}{run time (seconds)} \\
%           \hline
%    $|I| \times |J|$       & Formulation \eqref{eq:allFormulationv2} without \eqref{eq:validInequalities} & Formulation \eqref{eq:allFormulationv2} \\
%           \hline
%    5 $\times$ 10 &       2.89 &       2.67 \\
%    6 $\times$ 12 &       5.16 &       4.56 \\
%    7 $\times$ 14 &      12.53 &      12.18 \\
%    8 $\times$ 16 &      27.59 &      28.04 \\
%    9 $\times$ 18 &      86.00 &      82.35 \\
%   10 $\times$ 20 &     194.77 &     155.73 \\
%   \hline
%\end{tabular}
\begin{tabular}{crrrrrr}
\hline
           & \multicolumn{6}{c}{$|I| \times |J|$} \\
           \cline{2-7}
           & $5 \times 10$ & $6 \times 12$ & $7 \times 14$ & $8 \times 16$ & $9 \times 18$ & $10 \times 20$ \\
           \hline
    DDDR Average run time (seconds) & 2.15& 3.81	& 8.88 & 	17.46 & 	46.82 & 	111.36 \\
    Speed-up (times) & 1.04 & 1.19 & 1.09 & 1.12 & 1.03 & 1.15 \\
   \hline
\end{tabular}   
\end{table}

\section{Conclusion}
\label{Conclusion}

In this study, we propose a novel framework for modeling the facility location problem under distributionally robust decision-dependent demand distributions. We first provide a moment-based ambiguity set for describing the demand distributions of interest. We define the mean and variance of stochastic demand at each customer location as piecewise linear functions of the facility location decisions. Then, we formulate the  distributionally robust facility location problem under the proposed decision-dependent ambiguity set. We provide a closed-form expression for the inner problem which determines the assignment of customer demand to the open facilities. We further benefit from linear programming duality and convex envelopes to obtain exact representation of the proposed model as a mixed-integer linear program, and derive valid inequalities to further strengthen our formulation. An extensive set of instances are tested to assess the performance of the proposed approach depending on various problem characteristics. Our studies indicate superior performance of the proposed approach, which results in consistently higher profit and less unmet demand, compared to existing stochastic programming and distributionally robust methods. We also present the computational efficiency of the proposed valid inequalities with up to 19\% speed-up for different instances. 
We believe that our study leverages a novel line of research by providing insights for the facility location and optimization under uncertainty literature, and highlighting the need to represent the dependency between customer behavior and planner's decisions within various business settings.

~\\
{\bf Acknowledgement: } The authors are grateful to the support of United States National Science Foundation Grants CMMI-1727618, CCF-1709094, and CMMI-1633196 for this project. 

\bibliographystyle{apa}
\bibliography{references}

\end{document}